\theoremstyle{plain}
\newtheorem{theorem}{Theorem}[subsection]
\newtheorem{lemma}{Lemma}[subsection]
\newtheorem{definition}{Definition}
\theoremstyle{definition}
\theoremstyle{remark}
\newtheorem{remark}{Remark}[subsection]
\newtheorem{example}{Example}[subsection]
\DeclareMathOperator*{\argmin}{arg\,min}
\newcommand{\eps}{\varepsilon}
\newcommand{\lan}{\left\langle}
\newcommand{\ran}{\right\rangle}
\newcommand{\bitem}{\begin{itemize}}
\newcommand{\eitem}{\end{itemize}}
\newcommand{\bpm}{\begin{pmatrix}}
\newcommand{\epm}{\end{pmatrix}}
\newcommand{\T}{\top}
\newcommand{\la}{\langle}
\newcommand{\ra}{\rangle}
\def\equ#1{(\ref{#1})}
\newcommand{\cost}{\theta}
\newcommand{\duals}{\nu}
\newcommand{\nb}{\SN}
\def\##1{\relax\ifmmode\mathchoice      
{\mbox{\boldmath$\displaystyle#1$}}
{\mbox{\boldmath$\textstyle#1$}}
{\mbox{\boldmath$\scriptstyle#1$}}
{\mbox{\boldmath$\scriptscriptstyle#1$}}\else
\hbox{\boldmath$\textstyle#1$}\fi}
\newcommand{\SG}{{\cal G}}
\newcommand{\SE}{{\cal E}}
\newcommand{\SX}{{\cal X}}
\newcommand{\SY}{{\cal Y}}
\newcommand{\SN}{{\cal N}}
\newcommand{\SP}{{\cal P}}
\newcommand{\SL}{{\cal L}}
\newcommand{\SV}{{\cal V}}
\newcommand{\SU}{{\cal U}}
\newcommand{\BR}{\mathbb R}
\newcommand{\BM}{\mathbb M}
\newcommand{\BD}{\mathbb D}
\definecolor{checkColor}{RGB}{205,173,0}
\definecolor{fettrot}{RGB}{255,10,10}
\newcommand{\dualspace}{\ensuremath \BR^{\BD}}
\newcommand{\primalspace}{\ensuremath \BR_+^{\BM}}
\newcommand{\eg}{e.g.}
\title{Getting Feasible Variable Estimates From Infeasible Ones: MRF Local Polytope Study}
\author{Bogdan Savchynskyy, Stefan Schmidt\\
       bogdan.savchynskyy@iwr.uni-heidelberg.de,
       schmidt@math.uni-heidelberg.de \\
       Heidelberg Collaboratory for Image Processing\\
       University of Heidelberg\\
       Speyerer Str. 6\\
       69115 Heidelberg, Germany
}
\begin{document}

\maketitle

\begin{abstract}
This paper proposes a method for construction of approximate {\em feasible} primal solutions from dual ones for large-scale optimization problems possessing certain separability properties. Whereas infeasible primal estimates can typically be produced from (sub-)gradients of the dual function, it is often not easy to project them to the primal feasible set, since the projection itself has a complexity comparable to the complexity of the initial problem. We propose an alternative efficient method to obtain feasibility and show that its properties influencing the convergence to the optimum are similar to the properties of the Euclidean projection.
We apply our method to the local polytope relaxation of inference problems for Markov Random Fields and demonstrate its superiority over existing methods.
\end{abstract}

\section{Introduction}

Convex relaxations of combinatorial problems appearing in computer vision, processing of medical data, or analysis of transport networks often contain millions of variables and hundreds of thousands of constraints. 
It is also quite common to employ their dual formulations to allow for more efficient optimization, which due to strong duality 
delivers also primal solutions. Indeed, approximate primal solutions can usually be reconstructed from (sub-)gradients of the dual objective. However, these are typically infeasible. Because of the problem size, only first order methods (based on the function and its (sub-)gradient evaluation only) can be applied. Since feasibility is not guaranteed up to the optimum, it is hardly attainable for such methods because of their slow convergence.  The classical trick --- projection to the feasible set --- can not be used efficiently because of the problem size. 

A striking example of such a situation, which we explore in the paper, is the reconstruction of feasible primal estimates for local polytope relaxations of Markov random field (MRF) inference problems  \citep{Schlesinger76,Werner07,WainwrightBook08}. 

\paragraph{Motivation: Why Feasible Primal Estimates Are Needed.} It is often the case for convex relaxations of combinatorial problems that not a relaxed solution, but an integer approximation thereof is used in applications. 
Such integer primal solutions can be obtained from the dual ones due to the complementary slackness condition and using heuristic local search procedures \citep{Werner07,Kolmogorov06,Ravikumar10}. 
However, a sequence of {\em feasible} solution estimates of the {\em relaxed problem} converging to the optimum guarantees vanishing of the corresponding duality gap, and hence (i) determines a theoretically sound stopping condition \citep{Boyd2004}; (ii) provides a basis for the comparison of different optimization schemes for a given problem; (iii) allows for the construction of adaptive optimization schemes depending on the duality gap, for example adaptive step-size selection in subgradient-based schemes \citep{Komodakis10journal,Kappes2012} or adaptive smoothing selection procedures for non-smooth problems \citep{SavchynskyyUAI2012}. Another example is the tightening of relaxations with cutting-plane based approaches \citep{SontagEtAl_uai08}.

\paragraph{Contribution.} 
We propose an efficient and well-scalable method for constructing feasible points from infeasible ones for a certain class of separable convex problems. The method guarantees convergence of the constructed feasible point sequence to the optimum of the problem if only this convergence holds for their infeasible counterparts. We theoretically and empirically show how this method works in a local polytope relaxation framework for MRF inference problems.
We formulate and prove our results in a general way, which allows to apply them to arbitrary convex optimization problems having a similar separable structure. 

\paragraph{Formulation of the Main Result.}
We start by stating the main result of the paper for a separable linear programming problem. The result has a special form, which appears in the MRF energy minimization problem. This example illustrates the idea of the method and avoids shading it with numerous technical details. We refer to Sections~\ref{OptimizingProjection_Section} and~\ref{MRFInference_section} for all proofs, special cases and generalizations.

Let $\lan\cdot,\cdot \ran$ denote an inner product of two vectors in a Euclidean space. Let $\BR_{+}^n$ denote the non-negative cone of the $n$-dimensional Euclidean space~$\BR^n$. Let $I=\{1,\dots,N\}$, $J=\{1,\dots,M\}$, be sets of integer indexes and $\nb(j),\ j\in J$, be a collection of subsets of $I$. Let further $x\in\BR_+^{nI}$ be a collection of $(x_i\in \BR_+^n,\ i\in I)$ and $y\in\BR_+^{mJ}$ denote $(y_j\in \BR_+^m,\ j\in J)$. Let $A_{ij},\ i\in I,\ j\in J$ and $B_i,\ i\in I$ be matrices of dimensions $m\times n$ and $n\times k$ for some $k < n$ and let $c_i\in\BR^k$. Consider the following separable linear programming problem in the standard form
\begin{align}\label{LPexample_equ}
 \min_{x\in\BR_+^{nI}\atop y\in\BR_+^{mJ}} & \sum_{i=1}^N\lan a_i,x_i\ran + \sum_{j=1}^{M}\lan b_j,y_j\ran\\
&A_{ij} y_j=x_i,\ i\in \nb(j),\ j\in J\,,\nonumber\\
&B_i x_i=c_i,\ i\in I\,. \nonumber
\end{align}

Let $D$ be the feasible set of the problem~\equ{LPexample_equ} and the mapping $\SP\colon \BR_+^{nI}\times \BR_+^{mJ} \to D$ be defined such that $\SP(x,y)=(x',y')$, where
\begin{align}\label{optimizingProjectionExample_equ}
 & x_i',\ i\in I\ \mbox{are Euclidean projections of}\ x_i\ \mbox{to the sets}\ \{\tilde x_i\in\BR_+^n\colon B_i\tilde  x_i=c_i\}\,;\\
 & y_j':= \arg \min\limits_{y_j\in\BR_+^{m}} \lan b_j,y_j\ran\ \mbox{s.t.}\ A_{ij} y_j=x_i',\ i\in \nb(j)\,.\label{optimizingProjectionExample_equ2}
\end{align}

{\bf The  main result} of this paper states that {\em from the convergence of $(x^t,y^t)\in\BR^{nI}\times\BR^{mJ}$, $t=1,2,\dots\infty$ to the set of optimal solutions of~\equ{LPexample_equ} it follows that $\SP(x^t,y^t)$ converges to the set of optimal solutions as well.}

Please note that 
\begin{itemize}
 \item $\SP(x^t,y^t)$ is always feasible due to its construction;
 \item contrary to the Euclidean projection onto the set $D$, to compute $\SP(x^t,y^t)$ one has to solve many, but small quadratic and linear optimization problems~\equ{optimizingProjectionExample_equ}-\equ{optimizingProjectionExample_equ2}, assuming that $n\ll I$, $m\ll J$ and $N(J) \ll I$. To this end such powerful, but not very well scalable tools as simplex or interior point methods can be used due to the small size of these problems.
\end{itemize}

In Section~\ref{OptimizingProjection_Section} we additionally show how the convergence speed of $\SP(x^t,y^t)$ depends on coefficients $a_i$ and $b_i$.

Assuming that the set $D$ corresponds to the local polytope, variables $x_i$ and $y_i$ to unary and binary "max-marginals" and weights $a_i$ and $b_j$ to unary and pairwise potentials respectively, this result allows for an efficient estimation of feasible primal points from infeasible ones for MRF energy minimization algorithms, which has been considered as a non-trivial problem in the past \citep{Werner07}.

\paragraph{Related Work on MRF Inference}
The two most important inference problems for MRF's are maximum a posteriori (MAP) inference and marginalization \citep{WainwrightBook08}. Both are intractable in general and thus both require some relaxation. The simplest convex relaxation for both is based on exchanging an underlying convex hull of the feasible set, the marginal polytope, by an approximation called the local polytope \citep{WainwrightBook08}. 
However, even with this approximation the problems remain non-trivial, though solvable, at least theoretically. A series of algorithmic schemes were proposed to this end  for the local polytope relaxations of both MAP \citep{Komodakis10journal,SchlGig_12_usim2007,Ravikumar10,Savchynskyy11,SchmidtEMMCVPR11,Kappes2012,SavchynskyyUAI2012,MeshiGloversonECML11,MartinsICML11} and marginalization \citep{WainwrightLogPartition2005,Jancsary2011d,HazanShashua,HazanUAI2012}. It turns out that the corresponding dual problems have dramatically less variables and contain very simple constraints \citep{Werner07,WernerReport09}, hence they can even be formulated as unconstrained problems as it is done by~\cite{SchlGig_12_usim2007} and \cite{Kappes2012}. Therefore, most of the approaches address optimization of the dual objectives. A common difficulty for such approaches is the computation of a {\em feasible} relaxed primal estimate from the current dual one. {\em Infeasible} estimates can typically be obtained from the subgradients of the dual function as shown by~\cite{Komodakis10journal} or from the gradients of the smoothed dual as done by~\cite{Johnson07}, \cite{WernerReport09}, and \cite{Savchynskyy11}.

Even some approaches working in the primal domain \citep{HazanShashua,MartinsICML11,SchmidtEMMCVPR11,MeshiGloversonECML11} maintain infeasible primal estimates, whilst feasibility is guaranteed only in the limit.

Quite efficient primal schemes based on graph cuts proposed by~\cite{BoykovFastApproxGraphCuts2001} do not solve the problem in general and optimality guarantees provided by them are typically too weak. Hence we do discuss neither these here, nor the widespread message passing and belief propagation \citep{Kolmogorov06,WeissF01} methods, which also do not guarantee the attainment of the optimum of the relaxed problem.

\paragraph{Forcing Feasibility of Primal Estimates}

The literature on obtaining feasible primal solutions for MRF inference problems from infeasible ones is not very vast. Apart from our conference papers \citep{Savchynskyy11,SchmidtEMMCVPR11,SavchynskyyUAI2012}) preceding this work, we are aware of only two recent works contributing to this topic, by~\cite{SchlesingerEMMCVPR11} and~\cite{WernerCSM2011}. 

The method proposed by~\cite{SchlesingerEMMCVPR11} is formulated in the form of an algorithm able to determine whether a given solution accuracy $\eps$ is attained or not. To this end it restricts the set of possible primal candidate solutions and solves an auxiliary quadratic programming (QP) problem. However, this approach is unsuited to compute {\em the actually attained} $\eps$ directly and the auxiliary QP in the worst case grows linearly with the size of the initial linear programming problem. Hence obtaining a feasible primal solution becomes prohibitively slow as the size of the problem gets larger.

Another closely related method was proposed by~\cite{WernerCSM2011}. It is, however, only suited to determine whether a given solution of the dual problem is an optimal one. This makes it non-practical, since the state-of-the-art methods achieve the exact solution of the considered problem only in the limit, after a potentially infinite number of iterations.


\paragraph{Content and Organization of the Paper}

Besides this introduction the paper contains five further sections. In Section~\ref{OptimizingProjection_Section} we describe a general formulation and mathematical properties of the {\em optimizing projection} $\SP(x,y)$, as already introduced for a special case in~\equ{optimizingProjectionExample_equ}-\equ{optimizingProjectionExample_equ2}. We do this without relating it to inference in MRFs, to allow readers not familiar with the latter to catch the idea. Section~\ref{MRFInference_section} is devoted to both MAP and marginalization inference problems for MRF's and specifies how the optimizing projection can be constructed for corresponding primal and dual problems. In Section~\ref{applications_sec} we provide a list of algorithmic schemes working in the dual domain and show how primal estimates can be reconstructed from dual ones for all of them.  The feasibility of the estimates is guaranteed by our optimizing projection method.
The last Sections~\ref{demo_Section} and~\ref{Conclusions_Section} contain the experimental evaluation and conclusions, respectively.

\section{Optimizing Projection}\label{OptimizingProjection_Section}

Let us denote by $\Pi_{D}\colon\BR^n\to D$ an Euclidean projection to a set $D\subset\BR^n$. 
Let $X\subseteq\BR^n$ and $Y\subseteq\BR^m$ be two subsets of Euclidean spaces and $D\subset X\times Y$ be a closed convex set. We will denote as $D_{\SX}$ the set $\{x\in\SX\ | \exists y\in\SY\colon (x,y)\in D\}$, that is the projection of $D$ to $\SX$.

The main definition of the paper introduces the notion of the {\em optimizing projection} in its general form. A possible simplification and the corresponding discussion follow the definition.

\begin{definition}\label{optimizingProjection_def}
 Let $f\colon X\times Y\to\BR$ be a continuous convex function of two variables.  
 The mapping $\SP_{f,D}\colon X\times Y \to D$ such that $\SP_{f,D}(x,y)=(x',y')$ defined as
\begin{align}
 x'& = \Pi_{D_{\SX}}(x)\,,\label{euclideanProjectionStep_def}\\
 y' & = \min_{y\colon (x',y)\in D} f(x',y)\,,\label{partialMin_def}
\end{align}
is called an {\em optimizing projection} onto the set $D$ w.r.t.\ the function $f$.
\end{definition}

The definition shows the way to get a feasible point $(x',y')\in D$ from an arbitrary infeasible one $(x,y)$. Of course, getting just any feasible point is not a big issue in many cases. However, as we will see soon, the introduced optimizing projection possesses properties similar to the properties of a standard Euclidean projection, which makes it a useful tool in cases when its computation is easier than the one needed for the Euclidean projection. To this end both the partial projection~\equ{euclideanProjectionStep_def} and the partial minimization~\equ{partialMin_def} should be efficiently computable.

The role of projection~\equ{euclideanProjectionStep_def} is to make $x$ ``feasible'', i.e.\ to guarantee for $x'$ that there is at least one $y\in\SY$ such that $(x',y)\in D$, which guarantees the definition to be well-defined. If this condition holds already for $x$, it is easy to see that $x'=x$ and hence computing~\equ{euclideanProjectionStep_def} is trivial. We will call such $x$ {\em feasible} w.r.t.\ $D$. Indeed, in~\equ{euclideanProjectionStep_def} one can apply an arbitrary projection, since they all satisfy the mentioned property. However, we provide our analysis for Euclidean projections only.

\begin{example}
 Consider the linear programming problem~\equ{LPexample_equ} from the introduction. It is reasonable to construct an optimizing projection $\SP_{f,D}(x,y)$ for it as in~\equ{optimizingProjectionExample_equ}-\equ{optimizingProjectionExample_equ2}, denoting with $f$ and $D$ the objective function and the feasible set of the problem~\equ{LPexample_equ}.
\end{example}

We will deal with objective functions, which fulfill the following definition:
\begin{definition}\label{LipschitzContinuity_def}
 A function $f\colon X\times Y\to\BR$ is called Lipschitz-continuous w.r.t.\ its first argument $x$, if there exists a finite constant $L_X(f)$, such that
 $\forall y\in Y,\ x,x'\in X$ 
\begin{equation}
\vert f(x,y)-f(x',y)\vert \le L_X(f)\Vert x-x'\Vert 
\end{equation}
\end{definition}
holds. 
Similarly $f$ is Lipschitz-continuous w.r.t.\ 
\begin{itemize}
 \item $y$ if $\vert f(x,y)-f(x,y')\vert \le L_Y(f)\Vert y-y'\Vert$ for all $x\in X,\ y,y'\in Y$ and  some constant $L_Y(f)$;
 \item $z=(x,y)$ if $\vert f(x,y)-f(x',y')\vert \le L_{XY}(f)\Vert z-z'\Vert$ for all $z,z'\in X\times Y$ and  some constant 
$L_{XY}(f)$\,.
\end{itemize}

The following theorem specifies the main property of the optimizing projection, namely its continuity with respect to the optimal value of the function $f$.
\begin{theorem}\label{mainPropertyOptProjection_tm}
 Let $f$ be convex and Lipschitz-continuous w.r.t.\ its arguments $x$ and $y$ and let $f^*$ be the minimum of $f$ on the set $D$. Then for all $z=(x,y)\in X\times Y$ 
\begin{equation}\label{mainPropertyOptProjection_equ}
 \vert f(\SP_{f,D}(x,y)) - f^*\vert \le \vert f(x,y) - f^*\vert + (L_X(f) + L_Y(f))\Vert z-\Pi_{D}(z)\Vert\,
\end{equation}
holds.
If additionally $x$ is feasible w.r.t.\ $D$ the tighter inequality holds:
\begin{equation}\label{mainPropertyOptProjectionTrivialX_equ}
 \vert f(\SP_{f,D}(x,y)) - f^*\vert \le \vert f(x,y) - f^*\vert + L_Y(f)\Vert z-\Pi_{D}(z)\Vert\,.
\end{equation}
\end{theorem}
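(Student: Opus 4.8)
The plan is to exploit the feasibility of $\SP_{f,D}(x,y)$ to remove the absolute value on the left, and then to feed the \emph{minimality} of $y'$ a single, well-chosen feasible comparison point built from the Euclidean projection $\Pi_D(z)$.

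First I would fix notation: write $(x',y')=\SP_{f,D}(x,y)$ and $(\hat x,\hat y)=\Pi_D(z)$. Since $(x',y')\in D$ and $f^*=\min_D f$, we have $f(x',y')\ge f^*$, so the left-hand side equals $f(x',y')-f^*$ and the absolute value is harmless. Next I would record the two metric facts that are to be folded into $\Vert z-\Pi_D(z)\Vert$. Because $\hat x\in D_{\SX}$ while $x'=\Pi_{D_{\SX}}(x)$ is the nearest point of $D_{\SX}$ to $x$, we obtain $\Vert x-x'\Vert\le\Vert x-\hat x\Vert\le\Vert z-\Pi_D(z)\Vert$; and $\Vert y-\hat y\Vert\le\Vert z-\Pi_D(z)\Vert$ is immediate from $\Vert x-\hat x\Vert^2+\Vert y-\hat y\Vert^2=\Vert z-\Pi_D(z)\Vert^2$.

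The heart of the argument is one inequality coming from the partial minimization~\equ{partialMin_def}: since $y'$ minimizes $f(x',\cdot)$ over the slice $\{w:(x',w)\in D\}$, every feasible point of that slice is an upper bound, and in particular $f(x',y')\le f(x',\hat y)$ \emph{provided} $(x',\hat y)\in D$. Granting this, two Lipschitz steps close the estimate: $f(x',\hat y)\le f(x,\hat y)+L_X(f)\Vert x'-x\Vert\le f(x,y)+L_X(f)\Vert x'-x\Vert+L_Y(f)\Vert \hat y-y\Vert$, and both displacements are bounded by $\Vert z-\Pi_D(z)\Vert$ as above. Subtracting $f^*$ and using $f(x,y)-f^*\le\vert f(x,y)-f^*\vert$ then yields~\equ{mainPropertyOptProjection_equ}. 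For the sharpened bound~\equ{mainPropertyOptProjectionTrivialX_equ} I would note that feasibility of $x$ forces $x'=x$ in~\equ{euclideanProjectionStep_def}, so the $L_X(f)$-step disappears and only the $L_Y(f)$-term survives.

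The main obstacle is precisely the proviso $(x',\hat y)\in D$. The minimality of $y'$ delivers upper bounds only through points that genuinely lie in the slice $\{w:(x',w)\in D\}$, so the crux is to certify that the $y$-part of the Euclidean projection stays admissible once $x$ has been replaced by its separate projection $x'$ — equivalently, to exhibit some feasible completion of $x'$ whose distance to $y$ is still controlled by $\Vert z-\Pi_D(z)\Vert$. This compatibility is what the separable structure of $D$ in the $y$-block is meant to supply; for the problem~\equ{LPexample_equ} it must be read off from the coordinatewise form of~\equ{euclideanProjectionStep_def} together with the constraints $A_{ij}y_j=x_i$ linking $y$ to $x$. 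I expect establishing this admissibility, rather than the Lipschitz bookkeeping, to be where the real work lies; the same feasibility question (now for $(x,\hat y)$) also remains the only nontrivial point in the sharper estimate~\equ{mainPropertyOptProjectionTrivialX_equ}.
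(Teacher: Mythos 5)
Your argument is, step for step, the route taken in the paper's own proof: the comparison point fed to the minimality of $y'$ is the $y$-component $\hat y=y^p$ of the Euclidean projection $\Pi_D(z)$, and the surrounding bookkeeping ($\Vert x-x'\Vert\le\Vert x-\hat x\Vert\le\Vert z-\Pi_D(z)\Vert$, $\Vert y-\hat y\Vert\le\Vert z-\Pi_D(z)\Vert$, then two Lipschitz steps, in your case through $f(x,\hat y)$ and in the paper through $f(x',y)$, which is an immaterial reordering) is identical. The one point where you part ways with the paper is that you refuse to take for granted the proviso $(x',\hat y)\in D$. Your caution is exactly right: the paper's inequality \equ{optYpt_equ} instantiates ``any $y''$ with $(x',y'')\in D$'' at $y''=y^p$ without ever checking that membership, so the gap you flag is present, silently, in the published proof as well.

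Moreover, this gap cannot be closed at the stated level of generality, because the theorem itself fails for a general closed convex $D$. Take $X=Y=\BR$, $f(x,y)=y$ (convex, $L_X(f)=0$, $L_Y(f)=1$), $D=\{(t,Mt)\colon t\in[0,1]\}$ with $M>1$, and $z=(1,0)$. Then $f^*=0$ and $\vert f(z)-f^*\vert=0$; the point $x=1$ lies in $D_{\SX}=[0,1]$, so it is feasible w.r.t.\ $D$ and $x'=x$; the slice over $x'$ is the single point $\{M\}$, so $f(\SP_{f,D}(z))=M$; yet $\Vert z-\Pi_D(z)\Vert=M/\sqrt{1+M^2}<1$. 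Hence both \equ{mainPropertyOptProjection_equ} and the ``tighter'' bound \equ{mainPropertyOptProjectionTrivialX_equ} are violated. What rescues the statement in the paper's applications is structure invisible in Definition~\ref{optimizingProjection_def}: for polyhedral $D$ (the LP \equ{LPexample_equ}, the local polytope) the slice mapping $x\mapsto\{y\colon (x,y)\in D\}$ is Lipschitz in the Hausdorff distance on its domain (a Hoffman-type error bound), so one can replace $\hat y$ by a genuinely admissible $\tilde y$ in the slice over $x'$ with $\Vert\tilde y-\hat y\Vert\le\kappa\Vert x'-\hat x\Vert$; this repairs the argument, but the constant $\kappa$ then multiplies the $\Vert z-\Pi_D(z)\Vert$ term (in the counterexample $\kappa=M$, which is precisely what blows up). So your diagnosis that the admissibility of $(x',\hat y)$ is ``where the real work lies'' is correct, and that work is missing not only from your sketch but from the paper's proof; a complete proof needs the extra polyhedral (or Lipschitz-slice) hypothesis made explicit.
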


\begin{proof}
We will denote $(x^p,y^p)=z^p=\Pi_{D}(z)$ and $(x',y')=\SP_{f,D}(x,y)$.
Note that 
\begin{itemize}
 \item from $f^*\leq f(x',y') \leq f(x',y'')$ for any $y''\in Y$ such that $(x',y'')\in D$ it
follows that
\begin{equation}\label{optYpt_equ}
f^*\leq f(x',y') \leq f(x',y^p)\,,
\end{equation}
\item from $\Vert z-z^p \Vert = \sqrt{\Vert x-x^p \Vert^2 + \Vert y -y^p \Vert^2}$ it follows that
\begin{equation}\label{distZpt_equ}
 \Vert y -y^p \Vert \leq  \Vert z-z^p \Vert\ \mbox{and}\ \Vert x -x^p \Vert \leq  \Vert z-z^p \Vert\,.
\end{equation}
\item according to~\equ{euclideanProjectionStep_def} 
$x'=\Pi_{D_{\SX}}(x)=\arg\min_{\tilde x\in D_{\SX}}\Vert x-\tilde x\Vert$ and hence $\Vert x-x'\Vert \le \Vert x-x^p\Vert$ 
since $x^p\in D_{\SX}$. Combining this with~\equ{distZpt_equ} we obtain
\begin{equation}\label{distZpvsXp_equ}
 \Vert x -x' \Vert \le \Vert z-z^p \Vert\,.
\end{equation}
 
\end{itemize}

The proof follows from the following sequence of inequalities: 
\begin{multline}\label{projectionConvergence_inequ}
 \vert f(\SP_{f,D}(x,y))-f^*\vert =\vert f(x',y') - f^*\vert \stackrel{\equ{optYpt_equ}}{\leq} \vert f(x',y^p) - f^*\vert \\
\le \vert f(x',y^p) - f(x',y)\vert + \vert f(x',y) - f^*\vert \le L_Y(f)\Vert y-y^p\Vert +\vert f(x',y) - f^*\vert\\ 
\stackrel{\equ{distZpt_equ}}{\leq} L_Y(f)\Vert z-z^p\Vert +\vert f(x',y) - f^*\vert \,. 
\end{multline}

Estimate~\equ{mainPropertyOptProjectionTrivialX_equ} follows from~\equ{projectionConvergence_inequ} assuming that $x'=x$.

The proof for the more general case~\equ{mainPropertyOptProjection_equ} follows from~\equ{projectionConvergence_inequ} and Lipschitz-continuity of $f$ w.r.t.\ $x$:
\begin{multline}
 \vert f(\SP_{f,D}(x,y))-f^*\vert\stackrel{\equ{projectionConvergence_inequ}}{\leq} L_Y(f)\Vert z-z^p\Vert +\vert f(x',y) - f^*\vert \\
\le L_Y(f)\Vert z-z^p\Vert +  \vert f(x',y) - f(x,y)\vert + \vert f(x,y) - f^*\vert \\
\le L_Y(f)\Vert z-z^p\Vert + L_X(f) \Vert x'-x\Vert + \vert f(x,y) - f^*\vert \\
\stackrel{\equ{distZpvsXp_equ}}{\le} L_Y(f)\Vert z-z^p\Vert + L_X(f) \Vert z-z^p\Vert + \vert f(x,y) - f^*\vert
= (L_Y(f) +  L_X(f))\Vert z-z^p\Vert  + \vert f(x,y) - f^*\vert\,.
\end{multline}
\end{proof}

Theorem~\ref{mainPropertyOptProjection_tm} basically states that if the sequence $z^t=(x^t,y^t)\in X\times Y,\ t=1,\dots,\infty$ weakly converges to the optimum of $f$, then the same holds also for $\SP_{f,D}(x^t,y^t)$. Moreover, the rate of convergence is preserved up to a multiplicative constant.  Please note that $\SP_{f,D}(x,y)$ actually does not depend on $y$, it is needed only for the convergence estimates~\equ{mainPropertyOptProjection_equ} and~\equ{mainPropertyOptProjectionTrivialX_equ}, but not for the optimizing projection itself.

Let us provide an analogous bound for the Euclidean projection to get an idea how good the estimate given by Theorem~\ref{mainPropertyOptProjection_tm} is. Let $z$ and $z^p$ be defined as in the proof of the theorem. Then
 \begin{align}\label{EuclideanProjectionBound_equ}
  \vert f(z^p)-f^* \vert \le \vert f(z^p)-f(z)\vert + \vert f(z)-f^*\vert \le  \vert f(z)-f^*\vert +  L_{XY}(f)\Vert z-z^p\Vert\,.
 \end{align}

We see that bounds~\equ{mainPropertyOptProjection_equ} and~\equ{EuclideanProjectionBound_equ} for the optimizing mapping and Euclidean projection differ only by a constant factor: in the optimizing mapping, the Lipschitz continuity of the objective $f$ is considered w.r.t.\ to each variable $x$ and $y$ separately, whereas the Euclidean projection is based on the Lipschitz continuity w.r.t.\ the pair of variables $(x,y)$.

The following lemma shows the difference between these two Lipschitz constants. Together with the next one it will be intensively used in the rest of the paper: 
\begin{lemma}\label{linearFunctionLipschitzConst_lem}
 The linear function $f(x,y)=\lan a, x\ran+\lan b, y\ran$ is Lipschitz-continuous with Lipschitz constants $L_X(f)\le\Vert a\Vert$, $L_Y(f)\le\Vert b \Vert$ and $L_{XY}(f)\le\sqrt{L_X(f)^2+L_Y(f)^2}$.
\end{lemma}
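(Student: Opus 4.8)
The plan is to reduce every claim to a single application of the Cauchy--Schwarz inequality, exploiting the bilinearity of the inner product together with the fact that $f$ is affine in each block of variables. Since $f$ is linear, each difference of function values collapses to an inner product of the coefficient vector with a difference of arguments, and the remaining work is purely geometric.

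First I would treat the $x$-direction. Fixing $y\in Y$ and subtracting, the $\lan b,y\ran$ terms cancel, leaving $f(x,y)-f(x',y)=\lan a,x-x'\ran$; Cauchy--Schwarz then gives $\vert f(x,y)-f(x',y)\vert\le\Vert a\Vert\,\Vert x-x'\Vert$, so $\Vert a\Vert$ is an admissible Lipschitz constant and $L_X(f)\le\Vert a\Vert$. The $y$-direction is completely symmetric under the exchange of $(a,x)$ with $(b,y)$, yielding $L_Y(f)\le\Vert b\Vert$. For the joint constant I would identify $X\times Y$ with the product Euclidean space, in which the inner product of the vector $(a,b)$ with $z=(x,y)$ equals $\lan a,x\ran+\lan b,y\ran=f(z)$. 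Thus $f$ is again linear, now with coefficient vector $(a,b)$, and the same Cauchy--Schwarz argument gives $\vert f(z)-f(z')\vert\le\Vert(a,b)\Vert\,\Vert z-z'\Vert$, hence $L_{XY}(f)\le\Vert(a,b)\Vert$.

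It remains to rewrite $\Vert(a,b)\Vert$ in the desired form. The Pythagorean structure of the product space gives $\Vert(a,b)\Vert^2=\Vert a\Vert^2+\Vert b\Vert^2$, so $L_{XY}(f)\le\sqrt{\Vert a\Vert^2+\Vert b\Vert^2}$. The only point needing a little care is that the stated bound is expressed in terms of $L_X(f)$ and $L_Y(f)$ rather than $\Vert a\Vert$ and $\Vert b\Vert$: I would close this gap by observing that the bounds above are in fact attained, since choosing $x-x'$ parallel to $a$ (with $y=y'$) forces equality in Cauchy--Schwarz, so that the minimal Lipschitz constants satisfy $L_X(f)=\Vert a\Vert$ and $L_Y(f)=\Vert b\Vert$. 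Substituting these equalities converts $\sqrt{\Vert a\Vert^2+\Vert b\Vert^2}$ into $\sqrt{L_X(f)^2+L_Y(f)^2}$, giving the claimed inequality. This tightness observation is the only nonroutine ingredient; everything else follows directly from Cauchy--Schwarz and the bilinearity of the inner product.
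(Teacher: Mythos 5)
Your proposal is correct and takes essentially the same route as the paper, whose entire proof is a one-line application of the Cauchy--Bunyakovsky--Schwarz inequality $\lan c,\nu\ran\le\Vert c\Vert\cdot\Vert\nu\Vert$ with $\nu$ taken to be $x$, $y$, and $z=(x,y)$ respectively --- exactly your three bounds $\Vert a\Vert$, $\Vert b\Vert$, and $\Vert(a,b)\Vert=\sqrt{\Vert a\Vert^2+\Vert b\Vert^2}$. Your closing tightness digression (equality in Cauchy--Schwarz by choosing $x-x'$ parallel to $a$) is unnecessary under the paper's reading, where $L_X(f)$ and $L_Y(f)$ in the last inequality simply denote the admissible constants $\Vert a\Vert$ and $\Vert b\Vert$ just obtained; indeed, for a proper subset $X\subset\BR^n$ containing no pair of points differing by a multiple of $a$, the minimal constant can be strictly smaller than $\Vert a\Vert$, so it is safer to drop that step (or to replace it by the general estimate $\vert f(x,y)-f(x',y')\vert\le L_X(f)\Vert x-x'\Vert+L_Y(f)\Vert y-y'\Vert\le\sqrt{L_X(f)^2+L_Y(f)^2}\,\Vert z-z'\Vert$, which needs no tightness at all).
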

\begin{proof}
All three Lipschitz-constants are derived from the Cauchy-Bunyakovsky-Schwarz inequality
\begin{equation}
\lan c,\nu\ran\le \Vert c\Vert\cdot \Vert \nu\Vert,\ c,\nu\in \BR^N\,
\end{equation}
applied respectively to $x$, $y$ and $z=(x,y)$ in place of $\nu$.
\end{proof}

\begin{lemma}\label{entropyFunctionLipschitzConst_lem}
 The function $f(z)=\lan a,z\ran + \sum_{i=1}^N z_i\log z_i$, where $\log$ denotes the natural logarithm, is Lipschitz-continuous in the box $[\eps,M]^N\ni z,\ \eps>0, M>\eps$ with Lipschitz-constant 
\begin{equation}\label{entropyLipschitzConst_equ}
L_{XY}(f)\le\Vert a\Vert + N \max\{\vert 1+\log\eps\vert, \vert 1+\log M\vert\}\,. 
\end{equation}
\end{lemma}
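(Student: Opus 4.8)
The plan is to split $f$ into its linear and its entropic part and bound the Lipschitz constant of each separately, exploiting that the Lipschitz constant is subadditive under addition of functions (this is immediate from the triangle inequality $|f(z)-f(z')|\le|\langle a,z-z'\rangle| + |g(z)-g(z')|$). Write $f(z)=\langle a,z\rangle + g(z)$ with $g(z)=\sum_{i=1}^N z_i\log z_i$. By Lemma~\ref{linearFunctionLipschitzConst_lem} the linear term contributes a Lipschitz constant of at most $\Vert a\Vert$, so it remains only to show that $g$ is Lipschitz on the box $[\eps,M]^N$ with constant at most $N\max\{\vert 1+\log\eps\vert,\vert 1+\log M\vert\}$.

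To handle $g$, I would reduce everything to a one-dimensional estimate. Each summand is $\varphi(t)=t\log t$, which is $C^1$ on $[\eps,M]$ --- this is precisely where the hypothesis $\eps>0$ enters, keeping us away from the singularity of $\log$ at $0$ --- with derivative $\varphi'(t)=1+\log t$. Since $\varphi'$ is monotone increasing, the maximum of $\vert\varphi'\vert$ over $[\eps,M]$ is attained at one of the two endpoints, so $\sup_{t\in[\eps,M]}\vert\varphi'(t)\vert=\max\{\vert 1+\log\eps\vert,\vert 1+\log M\vert\}=:c$. By the mean value theorem $\varphi$ is therefore $c$-Lipschitz on $[\eps,M]$.

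From here I would assemble the multivariate bound coordinate by coordinate. For $z,z'\in[\eps,M]^N$,
\begin{equation*}
\vert g(z)-g(z')\vert \le \sum_{i=1}^N\vert\varphi(z_i)-\varphi(z_i')\vert \le c\sum_{i=1}^N\vert z_i-z_i'\vert\,.
\end{equation*}
The only remaining point is to convert this $\ell_1$-type sum into the Euclidean norm. Bounding each coordinate difference crudely by the full Euclidean distance, $\vert z_i-z_i'\vert\le\Vert z-z'\Vert$, and summing the $N$ terms gives $\sum_i\vert z_i-z_i'\vert\le N\Vert z-z'\Vert$, hence $\vert g(z)-g(z')\vert\le cN\Vert z-z'\Vert$. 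Adding the two Lipschitz constants yields exactly the claimed bound~\equ{entropyLipschitzConst_equ}.

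I do not expect a genuine obstacle here; the work is entirely routine. The only two points deserving a word of care are the endpoint argument for $\sup\vert\varphi'\vert$ (relying on monotonicity of $1+\log t$) and the deliberately loose step $\vert z_i-z_i'\vert\le\Vert z-z'\Vert$. The latter is what produces the factor $N$; one could sharpen it to $\sqrt{N}$ via $\Vert\cdot\Vert_1\le\sqrt{N}\,\Vert\cdot\Vert_2$, but the stated (weaker) bound follows from the simpler estimate and is all that is needed in the sequel.
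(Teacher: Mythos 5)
Your proof is correct and follows essentially the same route as the paper's: reduce to the one-dimensional function $t\log t$, bound its derivative $1+\log t$ on $[\eps,M]$ by $\max\{\vert 1+\log\eps\vert,\vert 1+\log M\vert\}$ using monotonicity, and assemble the full bound via subadditivity of Lipschitz constants together with Lemma~\ref{linearFunctionLipschitzConst_lem} for the linear part. The only cosmetic difference is that you invoke the mean value theorem where the paper uses the convexity gradient inequality, and your closing remark about sharpening $N$ to $\sqrt{N}$ is a correct (optional) refinement.
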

\begin{proof}
The function $f_i(z_i)=z_i\log z_i$ of a single variable is differentiable on $[\eps,M]$ and its derivative $f'_i(z_i)=1+\log z_i$ is monotone increasing, hence $f_i(z_i)$ is convex. This implies $f_i(z_i)-f_i(z'_i)\le f'_i(z_i) (z_i-z'_i)$ and $\vert f_i(z_i)-f_i(z'_i)\vert \le \vert f'_i(z_i)\vert \vert (z_i-z'_i) \vert$. Taking into account that due to monotonicity $\vert f'_i(z_i) \vert \le \max\{\vert 1+\log\eps \vert, \vert 1+\log M\vert\}$ for $z_i\in [\eps,M]$, and using the fact that $L(f_1+f_2)\le L(f_1)+L_f(f_2)$ together with Lemma~\ref{linearFunctionLipschitzConst_lem}, one obtains~\equ{entropyLipschitzConst_equ}. 
\end{proof}

\section{MRF Inference and Optimizing Projections}\label{MRFInference_section}

In this section we consider optimization problems related to inference in MRF's and construct corresponding optimizing projections. We switch from the general mathematical notation used in the previous sections to the one specific for the considered field, in particular we mostly follow the book of~\cite{WainwrightBook08}. 

The section consists of two subsections. 
The first one describes the MAP-inference problem for MRFs, its local polytope relaxation and the primal and dual formulations. In that subsection we show how the optimizing projection introduced in Section~\ref{OptimizingProjection_Section} can be applied to obtain both primal and dual feasible estimates.


The second part is devoted to a decomposition-based dual formulation, and it introduces basic notions for Section~\ref{applications_sec}. Additionally we show here how feasible primal estimates can be obtained for the tree-reweighted free energy introduced by~\cite{WainwrightLogPartition2005}.

\subsection{Local Polytope Relaxation}

This section is devoted to the maximum-a-posteriori (MAP) inference problem for Markov random fields, known also as an energy minimization problem. We derive primal and dual formulations for the so-called local polytope relaxation of the problem, analyze their separability properties and construct the corresponding optimizing projections.  

\subsubsection{Primal Problem}

\paragraph{Preliminaries}
Let $\SG=(\SV,\SE)$ be an undirected graph, where $\SV$ is a finite set of nodes and $\SE\subset\SV\times\SV$ is a set of edges. Let further $\SX_v,\ v\in\SV$, be finite {\em sets of labels}. The set $\SX=\otimes_{v\in\SV}\SX_v$, where $\otimes$ denotes the Cartesian product, will be called {\em labeling set} and its elements $x\in\SX$ are {\em labelings}. Thus each labeling is a collection $(x_v\colon v\in\SV)$ of labels. To shorten notation we will use $x_{uv}$ for a pair of labels $(x_u,x_v)$ and $\SX_{uv}$ for $\SX_u\times\SX_v$. 
The collections of numbers $\theta_{v,x_v},\ v\in\SV,\ x_v\in\SX_v$ and $\theta_{uv,x_{uv}},\ uv\in\SE,\ x_{uv}\in\SX_{uv}$ will be called {\em unary} and {\em pairwise potentials}, respectively.
The collection of all potentials will be denoted by $\theta$. 

The problem is to compute the labeling $x$ which minimizes the energy function $E_{\SG}$:  
\begin{equation}\label{minEnergy_equ2}
\min_{x\in\SX}E_{\SG}(\theta,x)=\min_{x\in\SX}\left\{\sum_{v\in\SV}\theta_{v,x_v}+\sum_{uv\in\SE}\theta_{uv,x_{uv}}\right\}.
\end{equation}

An alternative way of writing problem~\equ{minEnergy_equ2} is to express it in the form of a scalar product of the vector $\theta$, 
 denoting the collection of all $\theta_{v,x_v},\ v\in\SV,\ x_v\in\SX_v$ and $\theta_{uv,x_{uv}},\ uv\in\SE,\ x_{uv}\in\SX_{uv}$, with a suitably constructed binary vector $\phi(x),\ x\in\SX$: 
\begin{equation}\label{decreteObjective_equ}
\min_{x\in\SX}\lan \theta,\phi(x)\ran\, .   
\end{equation}

Denoting $\BR^{\sum_{v\in\SV}|\SX_v|+\sum_{uv\in\SE}|\SX_{uv}|}$ as $\BR(\BM)$ and the corresponding non-negative cone \\
$\BR^{\sum_{v\in\SV}|\SX_v|+\sum_{uv\in\SE}|\SX_{uv}|}_{+}$ as  $\primalspace$, we relax~\equ{minEnergy_equ2} to the linear programming problem \citep{Schlesinger76,Werner07}
 \begin{align}\label{LPRelaxation_equ}
&\min_{\mu\in\primalspace}\sum_{v\in\SV}\sum_{x_v\in\SX_v}\theta_{v,x_v}\mu_{v,x_v} 
+\sum_{uv\in\SE}\sum_{x_{uv}\in\SX_{uv}}\theta_{uv,x_{uv}}\mu_{uv, x_{uv}}\nonumber\\ 
&\mbox{s.t.}
\begin{array}{l}
 \sum_{x_v\in\SX_v}\mu_{v,x_v}=1,\ v\in\SV\,,\\
 \sum_{x_v\in\SX_v}\mu_{uv, x_{uv}}=\mu_{u,x_u},\ x_u\in\SX_u,\ uv\in\SE\,,\\
 \sum_{x_u\in\SX_u}\mu_{uv, x_{uv}}=\mu_{v,x_v},\ x_v\in\SX_v,\ uv\in\SE\,.\\
\end{array}
\end{align}
The constraints in~\equ{LPRelaxation_equ} form the {\em local polytope}, later on denoted as $\SL$. 
Slightly abusing notation, we will briefly write problem~\equ{LPRelaxation_equ} as 
\begin{equation}\label{localPolytopeEnergy_equ}
 \min_{\mu\in\SL}E(\mu):=\min_{\mu\in\SL}\lan\theta,\mu\ran\,.
\end{equation}

\begin{remark}
Please note that introducing additional constraints
\begin{equation}\label{integralityConstraint:equ}
\mu_{v,x_v}\in\{0,1\} \mbox{ and } \mu_{uv,x_{uv}}\in\{0,1\},\  v\in\SV,\ uv\in\SE,\ x_v\in{\SX_v},\ x_{uv}\in{\SX_{uv}}\,,
\end{equation}
would make~\equ{LPRelaxation_equ} equivalent to~\equ{minEnergy_equ2}. 
Each labeling $x\in\SX$ corresponds to some point $\mu$ satisfying the conditions of~\equ{LPRelaxation_equ} and~\equ{integralityConstraint:equ}, namely that having $\mu_{v,x'_v}=1$ iff $x'_v=x_v$, 0 else.
\end{remark}

\paragraph{Optimizing Projection}
 We will denote as $\theta_w$ and $\mu_w,\ w\in\SV\bigcup\SE$, the collections of $\theta_{w,x_w}$ and $\mu_{w,x_w},\ x_w\in\SX_w$, respectively. Hence the vectors $\theta$ and $\mu$ become collections of $\theta_w$ and $\mu_w,\ w\in\SV\bigcup\SE$. The $n$-dimensional simplex $\{x\in\BR_+^n\colon \sum_{i=1}^n x_i=1\}$ will be denoted as $\Delta(n)$.
 
Problem~\equ{LPRelaxation_equ} has a separable structure similar to~\equ{LPexample_equ}, i.e.\ for suitably selected matrices $A_{uv}$ it can be written as
 \begin{align}\label{LPRelaxation_equ_separable}
&\min_{\mu\in\BR(\BM)}\lan \theta_v,\mu_v\ran + \lan\theta_{uv},\mu_{uv}\ran\nonumber\\ 
&\mbox{s.t.}
\begin{array}{ll}
 \mu_v\in\Delta(|\SX_v|), & v\in\SV\,,\\
 A_{uv}\mu_{uv}=\mu_v,\ \mu_{uv}\ge 0, & uv\in\SE\,.
\end{array}
\end{align}
Note that under fixed $\mu_v$, the optimization of~\equ{LPRelaxation_equ_separable} splits into small independent subproblems, one for each $uv\in\SE$. We will use this fact to compute the optimizing projection onto the local polytope $\SL$ as follows.

Let $\mu_{\SV}$ and $\mu_{\SE}$ be collections of primal variables corresponding to graph nodes and edges respectively, i.e.\ $\mu_{\SV}=(\mu_v,\ v\in\SV)$ and $\mu_{\SE}=(\mu_{uv},\ uv\in\SE)$. The corresponding subspaces will be denoted by $\BR(\BM_{\SV})$ and $\BR(\BM_{\SE})$.
Then according to~\equ{LPRelaxation_equ_separable} and Definition~\ref{optimizingProjection_def}, the optimizing projection $\SP_{E,\SL}\colon \BR(\BM_{\SV})\times\BR(\BM_{\SE})\to\SL$ maps $(\mu_{\SV},\mu_{\SE})$ to $(\mu'_{\SV},\mu'_{\SE})$ defined as
\begin{align}
 &\ \mu'_v\ \ =\Pi_{\Delta(|\SX_v|)}(\mu_v),\ v\in\SV\label{primalOptMappingSimplexProjection_equ}\,,\\
 &\begin{array}{ll}
 \mu'_{uv} = \arg &\min\limits_{\mu_{uv}\ge 0}\lan \theta_{uv},\mu_{uv}\ran\\
	   & \mbox{s.t.\ }  A_{uv}\mu_{uv}=\mu'_v
\end{array},\ uv\in\SE\,. \label{primalOptMappingTransportpProblem_equ}
\end{align}

Note that both~\equ{primalOptMappingSimplexProjection_equ} and~\equ{primalOptMappingTransportpProblem_equ} can be computed very efficiently. Projection to a simplex in~\equ{primalOptMappingSimplexProjection_equ} can be done \eg\ by method of~\cite{Michelot86}. The optimization problem in~\equ{primalOptMappingTransportpProblem_equ} constitutes a small-sized {\em transportation problem} well-studied in linear programming, see for example the text-book of~\cite{BazaraaJarvis77}.

Let us apply Theorem~\ref{mainPropertyOptProjection_tm} and Lemma~\ref{linearFunctionLipschitzConst_lem} to the  optimizing projection~$\SP_{E,\SL}$ introduced in Definition~\ref{optimizingProjection_def}. According to these, the convergence rate of a given sequence $\mu^t\in\BR(\BM)$ in the worst case slows down by a factor $L_{\BM_{\SV}}(E) + L_{\BM_{\SE}}(E)\le\Vert \theta_{\SV}\Vert + \Vert \theta_{\SE}\Vert$.
This factor can be quite large, but since the optimum $E^*$ grows together with the value $\Vert\theta_{\SV}\Vert + \Vert \theta_{\SE}\Vert$, its influence on the obtained {\em relative} accuracy is typically much less than the value itself. 

\begin{remark}
However, if $\theta$ contains "infinite" numbers, typically assigned to pairwise factors $\theta_{\SE}$ to model "hard" constraints, both optimizing and Euclidean projections can be quite bad, which is demonstrated by the following example, depicted in Fig.~\ref{fig:exp0}:
$\SV=\{v,u\}$, $\SE=uv$, $\SX_v=\SX_u=\{0,1\}$, $\theta_{00}=\theta_{11}=\theta_{01}=0$, $\theta_{10}=\infty$. If now $\mu_{v,1}>\mu_{u,1}$, optimizing w.r.t.\ $\mu_{uv}$ leads to $\theta_{10}\cdot\mu_{vu,10}=\infty\cdot(\mu_{v,1}-\mu_{u,1})$, whose value can be arbitrary large, depending on the actual numerical value approximating $\infty$. And since neither the optimizing projection nor the Euclidean one take into account the actual values of pairwise factors when assigning values to $\mu_{\SV}$, the relation $\mu_{v,1}>\mu_{u,1}$ is not controlled.
\end{remark}

We provide an additional numerical simulation related to infinite values of pairwise potentials in Section~\ref{demo_Section}.

\begin{figure}[t]
\begin{center}
\mbox{\includegraphics[width=0.30\linewidth]{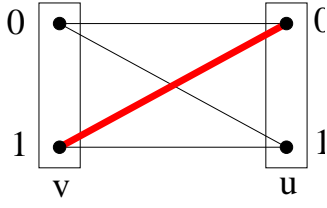}}
\end{center}

\caption{ A pairwise factor of a graphical model. Vertically oriented rectangles correspond to graph nodes $v$ and $u$, black circles inside to variable states $0$ and $1$. Lines connecting states in two nodes correspond to different values of pairwise potentials. Potentials corresponding to all states and pairs except the one $\theta_{vu,10}$ denoted by a thick red line are assumed to be equal $0$, whereas $\theta_{vu,10}$ is assumed to be infinitely large. Clearly optimal values of primal variables $\mu_v$ and $\mu_u$ assigned to these nodes always satisfy $\mu_{v,1} \le \mu_{u,1}$. Otherwise, due to local polytope constraints, an optimal value $\mu_{vu,10}$ corresponding to the infinite pairwise factor value is equal to $\mu_{v,1} - \mu_{u,1}$. This corresponds to arbitrary large primal objective values  even for very small positive values of $\mu_{v,1} - \mu_{u,1}$.
}
\label{fig:exp0}
\end{figure}

\begin{remark}[Higher order models and relaxations]
The generalization of the optimizing projection~\equ{primalOptMappingSimplexProjection_equ}-\equ{primalOptMappingTransportpProblem_equ} for both higher order models, and higher order local polytopes introduced by~\citet[Sec.~8.5]{WainwrightBook08} is quite straightforward. The underlying idea remains the same: one has to fix a subset of variables such that the resulting optimization problem splits into a number of small ones.
\end{remark}

\begin{remark}[Efficent representation of the relaxed primal solution]
Note that since the pairwise primal variables $\mu_{\SE}$ can be easily recomputed from unary ones $\mu_{\SV}$, it is sufficient to store only the latter if one is not interested in specific values of pairwise variables~$\mu_{\SE}$. Because of possible degeneracy, there may exist more than a single vector $\mu_{\SE}$ optimizing the energy $E$ for given $\mu_{\SV}$.
\end{remark}

\subsubsection{Lagrange Dual Problem}

\paragraph{Preliminaries} 

Problem~\equ{LPRelaxation_equ} can be written in a more compact form with a suitably selected matrix $A$ and vector $b$:
\begin{equation}\label{PrimalLP:equ}
 \min_{\mu\in\primalspace} \quad \la \cost,\mu\ra \quad    \text{s.t.} \quad  A\mu = b 
\end{equation}
 
Introducing the space $\dualspace:=\BR^{ |\SV| + |\SE|+\sum_{uv\in\SE}(|\SX_v|+|\SX_{u}|)}$, the dual problem reads 
  \begin{align}\label{DualLP:equ}
    \max_{\duals\in\dualspace} \quad  & \la b, \duals \ra \quad  \text{s.t.} \quad  A^\T\duals\le \cost\,.
  \end{align}

In what follows we will sometimes require an explicit form of $A^{\T}$. To this end we denote as $\nb(v)=\{u\in\SV\colon uv\in\SE\}$ the set of neighboring nodes of a node $v\in\SV$. We consider the dual variable $\duals\in\dualspace$ to consist of the following groups of coordinates: $\duals_v,\ v\in\SV$; $\duals_{uv},\ uv\in\SE$; and $\duals_{v\to u, x_v}$, $v\in\SV,\ u\in\nb(v),\ x_v\in\SX_v$. The dual~\equ{DualLP:equ} can be written explicitly \citep{Schlesinger76,Werner07} as: 

   \begin{align} 
     \max_{\duals\in\dualspace} \, & \sum_{v\in \SV}  \duals_v + \sum_{uv\in \SE}
     \duals_{uv} \label{DuaBlockLPRef:equ}  \\
\text{s.t.} \quad       
& \begin{cases}
 \cost_{v,x_v} - \sum\nolimits_{u \in
       \nb(v)}\duals_{v\to u, x_v}  \ge \duals_v\,,\ & \forall v\in \SV,\,x_v\in
     \SX_v\, , \\ 
    \cost_{uv,x_{uv}} + \duals_{u\to v, x_u} + \duals_{v\to u,x_v} \ge
   \duals_{uv},\ & \forall uv\in \SE\,,
   x_{uv}\in \SX_{uv}
   \,.
   \end{cases} \label{DuaBlockLPRefb:equ} 
   \end{align}

We will use the notation $\SU(\duals):=\lan b,\duals\ran=\sum_{v\in \SV}\duals_v + \sum_{uv\in \SE}\duals_{uv}$ for the objective function of the dual problem~\equ{DualLP:equ}.

\paragraph{Optimizing Projection} The dual~\equ{DualLP:equ} possesses clear separability as well. From~\equ{DuaBlockLPRefb:equ} it follows that after fixing all variables except $\duals_v,\ v\in\SV$, and $\duals_{uv},\ uv\in\SE$, the optimization w.r.t.\ the latter splits into a series of small and straightforward minimizations over a small set of values
\begin{align}
  \duals_v  & =\min_{x_v\in\SX_v}\cost_{v,x_v} - \sum\nolimits_{u \in \nb(v)}\duals_{v\to u, x_v},\ v\in \SV\,, \label{dualSeparableMin_v}\\ 
  \duals_{uv} & =\min_{x_{uv}\in\SX_{uv}}\cost_{uv,x_{uv}} + \duals_{u\to v, x_u} + \duals_{v\to u,x_v},\ uv\in \SE\,.\label{dualSeparableMin_uv}
\end{align}
The formula~\equ{dualSeparableMin_v} can be applied directly for each $v\in\SV$, and~\equ{dualSeparableMin_uv} accordingly for each $uv\in\SE$.

We denote by $\BD$ the dual feasible set ${\{\duals\in\BR(\BD)\colon A^{\T}\duals\le \cost\}}$. We split all dual variables into two groups. The first one will contain "messages" $\duals_{\to}=(\duals_{v\to u},\ v\in\SV,\ u\in\nb(v))$, that are variables, which reweight unary and pairwise potentials leading to improving the objective. The vector space containing all possible values of these variables will be denoted as $\BR(\BD_{\to})$. The second group will contain lower bounds on optimal reweighted unary and pairwise potentials $\duals_{0}=(\duals_w,\ w\in\SV\bigcup\SE)$. The total sum of their values constitutes the dual objective. All possible values of these variables will form the vector space $\BR(\BD_0)$.   Hence the optimizing projection $\SP_{\SU,\BD}\colon \BR(\BD_{\to})\times\BR(\BD_0)\to \BR(\BD)$ maps $(\duals_{\to},\duals_{0})$ to $(\duals'_{\to},\duals'_{0})$ as
\begin{align}
\duals'_{v\to u} & = \duals_{v\to u},\ v\in\SV,\ u\in\nb(v)\,,\label{trivialProjectionDual_equ} \\
\duals'_v  & =\min_{x_v\in\SX_v}\cost_{v,x_v} - \sum\nolimits_{u \in \nb(v)}\duals'_{v\to u, x_v},\ v\in \SV\,,\\ 
\duals'_{uv} & =\min_{x_{uv}\in\SX_{uv}}\cost_{uv,x_{uv}} + \duals_{u\to v, x_u} + \duals'_{v\to u,x_v},\ uv\in \SE\,.\label{trivialProjectionDual3_equ}
\end{align}


Equation~\equ{trivialProjectionDual_equ} corresponds to the projection~\equ{euclideanProjectionStep_def}, which has the form $\Pi_{\BR(\BD_{\to})}(\duals_{\to})= \duals_{\to0}$ and is thus trivial.

Applying Theorem~\ref{mainPropertyOptProjection_tm} and Lemma~\ref{linearFunctionLipschitzConst_lem} to the optimizing projection $\SP_{\SU,\BD}$ yields that the convergence of the projected $\duals^t$ slows down no more than by a factor $L_{\BD_0}\le\vert\sqrt{\SV}\vert + \vert \sqrt{\SE}\vert$ and does not depend on the potentials $\theta$. However, since an optimal energy value grows often proportionally to $\vert\SV\vert + \vert\SE\vert$, the influence of the factor on the estimated related precision is typically insignificant.

\subsection{Decomposition Based Dual Problem}

In this section we introduce an alternatively constructed dual objective, corresponding to the local polytope relaxation of the MAP-inference problem. We also consider a smoothed approximation thereof and show its connection to the so called tree-reweighted free energy. For the latter, we additionally construct the corresponding optimizing projection.

\subsubsection{Non-Smooth Dual Objective and Associated Subgradient}

\paragraph{Graph Decomposition}
There is an alternative way to formulate a dual problem to~\equ{minEnergy_equ2}. The corresponding technique is called Lagrangian or dual decomposition. We describe it here, because it is the base for most state-of-the-art dual algorithms for MAP and marginalization inference. In Section~\ref{applications_sec} we show how one can reconstruct primal estimates from dual ones for a range of such algorithms. 

The dual decomposition technique is based on a decomposing the graph $\SG$ into several subgraphs, which jointly cover $\SG$ and for which solving~\equ{minEnergy_equ2} is easy. The subgraphs' structures determine the underlying relaxation, and it is shown by~\cite{Komodakis10journal} that if all subgraphs are acyclic, the corresponding relaxation coincides with the local polytope relaxation, defined by~\equ{LPRelaxation_equ}. It is also known that on acyclic subgraphs problem~\equ{minEnergy_equ2} can be efficiently solved by dynamic programming.

To keep our exposition simple, we will consider the case of the graph $\SG$ being completely covered by only two acyclic subgraphs, which can be done \eg\ when $\SG$ has a grid structure. This allows to avoid technical details, preserves the main idea and can be generalized to more involved decompositions quite straightforwardly.

Let $\SG^i=(\SV^i,\SE^i),\ i=1,2$, be two \emph{acyclic} subgraphs of the \emph{master graph} 
$\SG$. Let $\SV^1=\SV^2=\SV$, $\SE^1\bigcup\SE^2=\SE$ and $\SE^1\bigcap\SE^2=\emptyset$ (\eg, $\SE^1$ may contain all horizontal edges of $\SG$ and $\SE^2$ all vertical ones if $\SG$ is a grid graph). Then the overall energy becomes the sum of the energies corresponding to these subgraphs,
\begin{equation}\label{decomposition_equ}
E_{\SG}(\theta,x) =\sum_{i=1}^{2}\sum_{v\in\SV^i}\theta^i_{v,x_v}+\sum_{uv\in\SE^i}\theta^i_{uv,x_{uv}}
                 =E_{\SG^1}(\theta^1,x)+E_{\SG^2}(\theta^2,x)\,,
\end{equation}
provided 
$\theta^i_{uv}=
\left\{
\begin{array}{ll}
\theta_{uv}, & uv\in\SE^i\\
0, & uv\notin\SE^i
\end{array}
\right.
,\ i=1,2$ 
and $\theta^1_{v,x_v}+\theta^2_{v,x_v}=\theta_{v,x_v},\ \forall v\in\SV, x_{v}\in\SX_v$. The latter condition can be represented in a parametric way as $\theta^1_{v,x_v}=\frac{\theta_{v,x_v}}{2}+\lambda_{v,x_v}$ and $\theta^2_{v,x_v}=\frac{\theta_{v,x_v}}{2}-\lambda_{v,x_v}, v\in\SV,\ x_v\in\SX_v$, where $\lambda_{v,x_v}\in\BR$. Thus we consider $\theta^i$ as a function of $\lambda=(\lambda_{v,x_v}\colon v\in\SV,\ x_v\in\SX_v)$ and have
\begin{equation}\label{upperBound_equ}
 \min_{x\in\SX}E_{\SG}(\theta,x)\ge \max_{\lambda}\sum_{i=1}^{2}\min_{x\in\SX}E_{\SG^i}(\theta^i(\lambda),x)=\min_{\mu\in\SL(\SG)}E_{\SG}(\theta,\mu).
\end{equation}
The last equation is not straightforward and we refer to the paper of~\cite{Komodakis10journal} for the proof.

\paragraph{Subgradient} 
The function 
\begin{equation}\label{DDdual_equ}
 U(\lambda):=\sum_{i=1}^{2} U^i(\lambda): =\sum_{i=1}^{2} \min_{x\in\SX}E_{\SG^i}(\theta^i(\lambda),x)=\sum_{i=1}^{2} \min_{x\in\SX}\lan\theta^i(\lambda),\phi(x)\ran
\end{equation}
is concave, but non-smooth. Its subgradient\footnote{Sometimes the term {\em supergradient} is used for concave functions.}
is equal to
\begin{equation}\label{subgradient_equ}
 \frac{\partial U}{\partial \lambda} = \sum_{i=1}^{2}  \frac{\partial U^i}{\partial \lambda} = \phi_{\SV}(x^{*1}) - \phi_{\SV}(x^{*2})\,,
\end{equation}
where $x^{*i}=\arg\min_{x\in\SX} \lan\theta^i(\lambda),\phi(x)\ran,\ i=1,2$. As we already mentioned, the $x^{*i}$ are computable by dynamic programming. This computation constitutes the basis for subgradient algorithms for MAP inference proposed by~\cite{SchlGig_12_usim2007} and~\cite{Komodakis10journal}. In Section~\ref{applications_sec} we will show how one can reconstruct primal estimates for this kind of algorithm.

\subsubsection{Smoothed Dual Objective and Associated Gradient}

Another way to maximize $U(\lambda)$ is to smooth it first and apply efficient smooth optimization techniques afterwards, as it is done \eg\ by~\cite{Savchynskyy11}. 

To obtain a smooth approximation, we replace $\min$ (or rather $-\max$) by the well-known log-sum-exp (or soft-max) function \citep{Rockafellar-Wets-04,Nesterov04}, yielding
\begin{equation}\label{smoothedUi_equ}
\hat U_{\rho}(\lambda):=\sum_{i=1}^{2}\hat U^i_{\rho}(\lambda):=-\sum_{i=1}^{2}\rho\log\sum_{x\in\SX}\exp\lan-\theta^i(\lambda)/\rho,\phi(x)\ran
\end{equation}
with \emph{smoothing parameter} $\rho$. The function $\hat U_{\rho}$ uniformly approximates $U$, as shown by \eg~\cite{Savchynskyy11}, that is,
\begin{equation}\label{smoothApprox_ineq}
 \hat U_{\rho}(\lambda)+2\rho\log|\SX|\ge U(\lambda) \ge \hat U_{\rho}(\lambda)\,.
\end{equation}

Please note that for acyclic graphs $\SG^i$ evaluating $\hat U^i_{\rho}$ (and thus $\hat U_{\rho}$) is as easy as $U^i$, and can be done by dynamic programming.

We introduce the vectors of ``marginals`` $\mu^i(\lambda)\in\primalspace,\ i\in\{1,2\}$ by
\begin{equation}\label{smoothDerivative_equ}
{\mu^i_{\rho}(\lambda)}_{w,x_w}:=\frac{\sum\limits_{x' \in \SX,x'_w=x_w}\exp\lan-\theta^i(\lambda)/\rho,\phi(x')\ran}{\exp(-\hat U^i_{\rho}(\lambda)/\rho)},\ w\in\SV^i\cup\SE^i\,.
\end{equation}
It is well-known, that the gradient of $\hat U_{\rho}$ is equal to 
\begin{equation}\label{gradientU_equ}
 \nabla\hat U_{\rho}(\lambda)=\mu^1_{\rho}(\lambda)_{\SV}-\mu^2_{\rho}(\lambda)_{\SV}\, .
\end{equation}
We refer to \citet[Lemma 1]{Savchynskyy11} for technical details.

\subsubsection{Tree-Reweighted Free Energy}
Let $N_v$ and $N_{uv}$ be the numbers of subgraphs containing node $v\in\SV$ and edge $uv\in\SE$ of the graph $\SG$. In the considered special case of the grid graph, $N_v=2$ and $N_{uv}=1$.

\begin{definition}
The function $\hat E_{\rho}\colon \SL\to \BR$ depending on a positive parameter $\rho$ 
\begin{multline}\label{smoothPrimal:equ}
\hat E_{\rho}(\mu):=\lan\theta,\mu\ran - \rho\Big(\sum_{v\in\SV}\sum_{x_v\in\SX_v}N_v\mu_{v,x_v}\log\mu_{v,x_v}\\ +\sum_{uv\in\SE}\sum_{x_{uv}\in\SX_{uv}}N_{uv}\mu_{uv,x_{uv}}\log\frac{\mu_{uv,x_{uv}}}{\mu_{v,x_v}\mu_{u,x_u}}\Big)
\end{multline}
is called the negative {\em tree-reweighted free energy}, introduced by~\cite{WainwrightBook08}.
\end{definition}

The problem of minimizing $\hat E_{\rho}$ on $\SL$ is important due to the fact that it is dual to the problem of maximizing $\hat U$, as shown \eg\ by~\cite{WernerReport09}. The duality holds not only in the considered special case of decomposition into two acyclic subgraphs, but for a decomposition into any number of acyclic subgraphs. In contrast to $U$, whose dual~\equ{LPRelaxation_equ} does not depend on the decomposition, the function $\hat E_{\rho}$ does, indeed. 
Maximizing $\hat U_{\rho}$ with $\rho=1$ is used by different algorithms \citep{WainwrightLogPartition2005,Jancsary2011d} to estimate marginal probabilities of the underlying Gibbs distribution \citep{WainwrightBook08}. The maximization delivers in the limit the same value of the objective as the minimization of $\hat E_{\rho}$. Hence it is important to compute {\em feasible} primal estimates based on dual iterates.

Another important meaning 
of $\hat E_{\rho}$ is considering it as an approximation of the relaxed energy $E$. This is due to the fact that 
the difference between~\equ{smoothPrimal:equ} and~\equ{localPolytopeEnergy_equ} vanishes continuously with $\rho$. This is stated precisely by the following lemma: 
\begin{lemma}\label{smoothVSnonsmoothPrimal_lem}    
For functions $E$ and $\hat E_{\rho}$ defined respectively by~\equ{localPolytopeEnergy_equ} and~\equ{smoothPrimal:equ} such a constant $C_{H}\ge 0$ exists that
\begin{equation}\label{smoothVSnonsmoothPrimal_equ}
 \hat E_{\rho}(\mu) \le E(\mu) \le \hat E_{\rho}(\mu) + \rho\cdot C_{H},\ \mu\in\SL\,
\end{equation}
and
\begin{equation}\label{smoothVSnonsmoothPrimal_equ_opt}
\hat E^*_{\rho} \le  E^* \le \hat E^*_{\rho}  + \rho\cdot C_{H} \,
\end{equation}
\end{lemma}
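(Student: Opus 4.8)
The plan is to reduce both claimed inequalities to a single uniform two-sided bound on the entropy correction term. Subtracting~\equ{smoothPrimal:equ} from~\equ{localPolytopeEnergy_equ} the linear parts $\langle\theta,\mu\rangle$ cancel, and the difference is $\rho$ times the tree-reweighted entropy:
\begin{equation}
E(\mu)-\hat E_{\rho}(\mu)=\rho\,\SH(\mu),\qquad
\SH(\mu):=\sum_{v\in\SV}N_v H(\mu_v)-\sum_{uv\in\SE}N_{uv}I(\mu_{uv}),
\end{equation}
where $H(\mu_v):=-\sum_{x_v\in\SX_v}\mu_{v,x_v}\log\mu_{v,x_v}\ge 0$ is the node entropy and $I(\mu_{uv}):=\sum_{x_{uv}}\mu_{uv,x_{uv}}\log\frac{\mu_{uv,x_{uv}}}{\mu_{v,x_v}\mu_{u,x_u}}\ge 0$ the edge mutual information (with the usual convention $0\log 0=0$). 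Since $\rho>0$, the pointwise claim~\equ{smoothVSnonsmoothPrimal_equ} is equivalent to $0\le \SH(\mu)\le C_H$ for all $\mu\in\SL$ with $C_H$ independent of $\mu$ and $\rho$, so the entire proof amounts to bounding $\SH$ from both sides.

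For the lower bound $\SH(\mu)\ge 0$ I would exploit the decomposition structure of~\equ{decomposition_equ} rather than the raw coefficients. Because $N_v$ and $N_{uv}$ count exactly the subgraphs $\SG^i$ containing $v$ and $uv$, regrouping the two sums by subgraph gives $\SH(\mu)=\sum_i\bigl(\sum_{v\in\SV^i}H(\mu_v)-\sum_{uv\in\SE^i}I(\mu_{uv})\bigr)$. Each inner summand is precisely the entropy of the distribution $p_i(x)=\prod_{v\in\SV}\mu_{v,x_v}\prod_{uv\in\SE^i}\frac{\mu_{uv,x_{uv}}}{\mu_{u,x_u}\mu_{v,x_v}}$ which factorizes over the \emph{acyclic} $\SG^i$; since $\SG^i$ is a forest this is a genuine normalized probability distribution on $\SX$ whose node and edge marginals are the given $\mu_v,\mu_{uv}$, and the entropy of any distribution on a finite set is non-negative. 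Hence every inner term is $\ge 0$, so $\SH(\mu)\ge 0$, which yields $\hat E_{\rho}(\mu)\le E(\mu)$.

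For the upper bound I would discard the non-negative mutual-information terms and bound each node entropy by its maximal simplex value $H(\mu_v)\le\log|\SX_v|$ (attained at the uniform distribution). This gives $\SH(\mu)\le\sum_{v\in\SV}N_v\log|\SX_v|=:C_H$, a finite constant depending only on the graph and the label sets, in particular independent of $\mu$ and $\rho$ (note $C_H\le 2\log|\SX|$ in the grid case, consistent with the dual-side bound~\equ{smoothApprox_ineq}). Combining the two bounds establishes~\equ{smoothVSnonsmoothPrimal_equ}.

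Finally, the optimal-value statement~\equ{smoothVSnonsmoothPrimal_equ_opt} follows from the pointwise bounds by evaluating them at the respective minimizers. Let $\mu^{E}$ minimize $E$ and $\mu^{\hat E}$ minimize $\hat E_{\rho}$ over $\SL$. Then $\hat E^*_{\rho}\le\hat E_{\rho}(\mu^{E})\le E(\mu^{E})=E^*$ from the left inequality, and $E^*\le E(\mu^{\hat E})\le\hat E_{\rho}(\mu^{\hat E})+\rho C_H=\hat E^*_{\rho}+\rho C_H$ from the right inequality. I expect the only genuinely delicate step to be the lower bound: one must verify that the forest factorization $p_i$ is a valid normalized distribution carrying the prescribed marginals, a standard property of tree- and forest-structured models that relies precisely on the acyclicity of the $\SG^i$ assumed in the decomposition; once this is in place, non-negativity of entropy closes the argument. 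The upper bound and the passage to optimal values are routine.
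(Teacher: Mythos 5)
Your proof is correct and takes essentially the same route as the paper's own, far terser, proof: the paper likewise represents the entropy correction as a combination of entropies of Gibbs distributions on the acyclic subgraphs $\SG^i$ and invokes non-negativity and boundedness of entropy on $\SL$; you merely make explicit what the paper outsources to citations, namely the regrouping by subgraph, the forest-marginal consistency of the factorized distributions $p_i$, the explicit constant $C_H=\sum_{v\in\SV}N_v\log\vert\SX_v\vert$, and the passage from the pointwise bound to the bound on optimal values.

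One caveat is worth recording. Your opening identity $E(\mu)-\hat E_{\rho}(\mu)=\rho\,\SH(\mu)$ does not match~\equ{smoothPrimal:equ} as printed: literally, the bracket there equals $-\SH(\mu)$ in your notation (its unary part is $-\sum_{v}N_v H(\mu_v)$ and its pairwise part is $+\sum_{uv}N_{uv}I(\mu_{uv})$), so the printed formula yields $\hat E_{\rho}=E+\rho\,\SH\ge E$, under which the lemma's first inequality would be false for any $\mu$ with positive tree-reweighted entropy (e.g.\ all marginals uniform). This is a sign typo in the paper rather than an error on your side: the intended object is the standard negative tree-reweighted free energy $\hat E_{\rho}=E-\rho\,\SH$, which is what makes the lemma true, what is consistent with the duality with $\hat U_{\rho}$ (cf.~\equ{smoothApprox_ineq}), and what the paper's own proof implicitly assumes when it calls the bracketed value a non-negative combination of entropies. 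Your silent correction is the right reading, but it should be stated explicitly rather than passed over, since as written your first display contradicts the printed definition.
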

hold.
\begin{proof}
The value in brackets in~\equ{smoothPrimal:equ} can be represented as a convex combination of entropies of Gibbs distributions associated with subgraphs $\SG^i$ participating in the decomposition~\equ{decomposition_equ} up to a scale factor \citep{WainwrightLogPartition2005,WernerReport09}. The entropies are non-negative and bounded on $\SL$ functions. Their convex combination is non-negative and bounded as well, which proves the statement of the lemma.
\end{proof}
We will employ Lemma~\ref{smoothVSnonsmoothPrimal_lem} in Section~\ref{applications_sec}.

\paragraph{Optimizing Projection w.r.t.\ Tree-Reweighted Free Energy} The negative free energy $\hat E$ is separable w.r.t.\ $\mu_{\SV}$ and $\mu_{\SE}$ like the MAP-energy function $E$. Since the underlying constraint set --- the local polytope --- is the same, the definition of the optimizing projection w.r.t.\ the negative free energy $\hat E_{\rho}$  differs only slightly from the one for the MAP-energy $E$. Namely, the optimizing projection $\SP_{\hat E_{\rho},\SL}\colon \BR(\BM_{\SV})\times\BR(\BM_{\SE})\to\SL$ maps $(\mu_{\SV},\mu_{\SE})$ to $(\mu'_{\SV},\mu'_{\SE})$, defined as
\begin{align}
 &\ \mu'_v\ \ =\Pi_{\Delta(|\SX_v|)}(\mu_v),\ v\in\SV\label{primalOptMappingSimplexProjectionSmooth_equ}\,,\\
 &\begin{array}{ll}
 \mu'_{uv} = \argmin\limits_{\mu_{uv}\ge 0} &\lan \theta_{uv} + \rho N_{uv} \log\left(\mu'_u\mu'_v\right),\mu_{uv}\ran -\rho N_{uv}\lan \mu_{uv},\log\mu_{uv}\ran\\
	   & \mbox{s.t.}\,  A_{uv}\mu_{uv}=\mu'_v
\end{array},\ uv\in\SE\,. \label{primalOptMappingTransportpProblemSmooth_equ}
\end{align}
 
The only difference of $\SP_{\hat E_{\rho},\SL}$ to $\SP_{E,\SL}$ defined by~\equ{primalOptMappingSimplexProjection_equ}-\equ{primalOptMappingTransportpProblem_equ} is the objective function. It is not linear anymore as in~\equ{primalOptMappingTransportpProblem_equ}, but~\equ{primalOptMappingTransportpProblemSmooth_equ} constitutes rather a small-sized {\em entropy minimization problem}, which can be solved \eg\ by interior point methods. 

As before, we apply Theorem~\ref{mainPropertyOptProjection_tm} and Lemma~\ref{entropyFunctionLipschitzConst_lem} to get an idea about the convergence rate of the projected sequence $\SP_{\hat E_{\rho},\SL}(\mu^t_{\SV},\mu^t_{\SE}),\ t=1,\dots,\infty$, in comparison to the convergence rate of the original, infeasible sequence $\mu^t$. According to Lemma~\ref{entropyFunctionLipschitzConst_lem}, the estimation of the Lipschitz constant for the function~\equ{primalOptMappingTransportpProblemSmooth_equ} becomes bad (the constant becomes big) when some coordinate $\mu^t_{uv,x_{uv}}$ vanishes. However, the increase of the Lipschitz constant is only logarithmic w.r.t.\ the precision $\eps$ which has to be attained. Hence, the role of entropy terms $\mu^t_{uv,x_{uv}}\log\mu_{uv,x_{uv}}$ in slowing down the convergence of feasible primal estimates is typically insignificant.

\section{Application to Algorithmic Schemes}\label{applications_sec}

In previous sections we concentrated on the way to compute the optimizing projection assuming that the weakly converging (but infeasible) sequence is given. In contrast, this section is devoted to the methods of obtaining such sequences within different optimization algorithms.

\paragraph{Saddle-point Primal-Dual Algorithms}

In the simplest case the (infeasible) optimizing sequences for the primal~\equ{PrimalLP:equ} and dual~\equ{DualLP:equ} problems are generated by an algorithm itself, as it is typical for primal-dual saddle-point formulation based algorithms. A striking example of such an approach is the First Order Primal-Dual Algorithm (FPD) by~\cite{ChambollePock11}, which was recently applied to the local polytope relaxation of the MAP problem by~\cite{SchmidtEMMCVPR11}. 

The pair~\equ{PrimalLP:equ}-\equ{DualLP:equ} is cast as a saddle point problem via their Lagrangian,
\begin{equation}
      \label{eq:fpd-saddlepoint-lp}
      \max_{\mu \geq 0} 
      \min_{\duals} 
      \quad
      \lbrace
      \la -b, \duals \ra 
      + \la \mu , A^\T \duals \ra
      - \la \cost, \mu  \ra 
      \rbrace
      \,.
\end{equation}

The FPD algorithm iteratively updates the primal $\mu^t\in\primalspace$ and dual $\duals^t\in\dualspace$ approximate solutions and guarantees their weak convergence to the optimum of the primal~\equ{PrimalLP:equ} and dual~\equ{DualLP:equ} problems, respectively. However, the iterates $\mu^t$ and $\duals^t$ are not feasible in general and hence computing a duality gap requires their projection to the feasible sets. We do this by computing $\SP_{E,\SL}(\mu_{\SV}^t,\mu_{\SE}^t)$, as defined by~\equ{primalOptMappingSimplexProjection_equ}-\equ{primalOptMappingTransportpProblem_equ}, and $\SP_{\SU,\BD}(\duals^t_{\to},\duals^t_{0})$, defined by~\equ{trivialProjectionDual_equ}-\equ{trivialProjectionDual3_equ}.

\paragraph{Subgradient Ascent}

One of the first optimization algorithms with convergence guarantees for the dual decomposition based objective $U$ defined in~\equ{DDdual_equ} was 
subgradient ascent proposed by~\cite{SchlGig_12_usim2007} and~\cite{Komodakis07mrfoptimization}. It produces the sequence 
\begin{equation}\label{subgradientAlg_def}
 \lambda^{t+1}=\lambda^t + \tau^t \frac{\partial U}{\partial \lambda}(\lambda^t)\,,
\end{equation}
where $\tau^t$ is a positive step-size fulfilling the conditions
\begin{equation}
 \tau^t \to 0\,,\quad \sum_{t=1}^{\infty}\tau^t = \infty\,. 
\end{equation}

It is shown by~\cite{Larsson99_ergodic} and later applied by~\cite{Komodakis10journal} that time-averaged optimal labelings $\phi(x^{*i,t}),\ i=1,2$ (see~\equ{subgradient_equ}) converge to the primal solution of the relaxed MAP-inference problem~\equ{LPRelaxation_equ}. This implies that there is an optimal solution $\mu^*$ of~\equ{LPRelaxation_equ}, such that 
\begin{equation}\label{averageSubgrad_equ}
 \forall i: w\in\SV^i\cup\SE^i\quad \frac{\sum_{k=1}^{t}\phi_w(x^{*i,k})}{t}\xrightarrow{t\to\infty} \mu^*_w \,.
\end{equation}

The same convergence guarantee holds also for weighted averaging with step sizes $\tau^t$
\begin{equation}\label{averageWeightedSubgrad_equ}
  \forall i: w\in\SV^i\cup\SE^i\quad \frac{\sum_{k=1}^{t}\tau^k\phi_w(x^{*i,k})}{\sum_{k=1}^{t}\tau^k}\xrightarrow{t\to\infty} \mu^*_w \,.
\end{equation}

None of the sequences $\mu^t$, neither the one defined in~\equ{averageSubgrad_equ} nor that of~\equ{averageWeightedSubgrad_equ}, is feasible in general. Hence one has to apply the optimizing projection $\SP_{E,\SL}$ defined by~\equ{primalOptMappingSimplexProjection_equ}-\equ{primalOptMappingTransportpProblem_equ} to make them feasible. 
 Please note that an explicit form of $\mu_{\SE}^t$ is not important for this operation, since $\SP_{E,\SL}$ does not actually depend on it. Taking into account that the dual variables $\lambda^t$ are unconstrained and hence feasible without any projection, one can directly estimate the duality gap as $E(\SP_{E,\SL}(\mu_{\SV}^t,\mu_{\SE}^t))-U(\lambda^t)$.

\paragraph{Methods Based on Smoothing, Tree-Reweighted Primal Bound \citep{Savchynskyy11,SavchynskyyUAI2012,Jancsary2011d,HazanShashua}.} Reconstructing a primal sequence for gradient-based methods optimizing the smoothing objective $\hat U$ is similar to~\equ{averageSubgrad_equ}, but does not even require an averaging of gradients over iterations to obtain convergence. 

In particular, the vector $\mu_{\rho}^i(\lambda^t)$ defined by~\equ{smoothDerivative_equ} converges to an optimum $\hat \mu^*$ of the tree-reweighted free energy $\hat E_{\rho}$ as $\lambda^t$ approaches an optimum $\hat \lambda^*$ of the smoothed dual function $\hat U_{\rho}$:
\begin{equation}\label{SmoothGradConverg_equ}
  \forall i: w\in\SV^i\cup\SE^i\quad \mu_{\rho}^i(\lambda^t)\xrightarrow{\lambda^t\to\hat \lambda^*} \hat \mu^*_w \,.
\end{equation}

Analogous to the subgradient optimization we can apply the optimizing mapping $\SP_{\hat E_{\rho},\SL}$ to get a feasible primal estimate without knowing an explicit expression for $\mu_{\SE}^t$, since the optimizing projection does not depend on its second argument. As in the non-smooth case, since the $\lambda^t$ are unconstrained, the value $\hat E_{\rho}(\SP_{\hat E_{\rho},\SL}(\mu_{\SV}^t,\mu_{\SE}^t))-\hat U_{\rho}(\lambda^t)$ is the duality gap.

\begin{remark}
If the final objective of the optimization is not the tree-reweighted free energy $\hat E_{\rho}$, but the MAP-energy $E$, and the smoothing is used as an optimization tool to speed up convergence, one can obtain even better primal bounds for a lesser computational cost. Namely, due to~\equ{smoothVSnonsmoothPrimal_equ} the optimizing projection $\SP_{E,\SL}$ can be applied to approximate the optimal solution of the MAP-energy $E$. Denote
\begin{equation}
\hat\mu'=(\hat\mu'_{\SV},\hat\mu'_{\SE})= \SP_{\hat E_{\rho},\SL}(\mu_{\SV},\mu_{\SE})
\quad
\mbox{and} 
\quad
\mu'=(\mu'_{\SV},\mu'_{\SE})= \SP_{E,\SL}(\mu_{\SV},\mu_{\SE})\,.
\end{equation}

From the definitions~\equ{primalOptMappingSimplexProjection_equ} and~\equ{primalOptMappingSimplexProjectionSmooth_equ} follows that  $\hat\mu'_{\SV}=\mu'_{\SV}$, and thus due to~\equ{primalOptMappingTransportpProblem_equ} and~\equ{primalOptMappingTransportpProblemSmooth_equ} $E(\mu')\le E(\hat\mu')$. This means that the projection $\SP_{E,\SL}$ is preferable for approximating the minimum of $E$ over $\SL$ even in the case when the smooth objective $\hat U_{\rho}$ was optimized and not the original non-smooth $U$. As an additional benefit, one obtains faster convergence of the projection even from the wost-case analysis, due to a better estimates of the Lipschitz constant for the function $E$ compared to the function $\hat E_{\rho}$, as estimated in Lemmas~\ref{linearFunctionLipschitzConst_lem} and~\ref{entropyFunctionLipschitzConst_lem}.
\end{remark}

\paragraph{Bundle methods, ADLP, ADMM, TRWS, MPLP and others.} Analogous converging primal sequences can be constructed for other optimization approaches as well. For bundle methods, recently applied to MAP-inference by~\cite{Kappes2012}, one has to average the resulting optimal labelings $\phi(x^{*i,t}),\ t=1,\dots,\infty$, with weights $\xi_t$ obtained from the solution of the auxiliary problem, see~\citet[eq.~23]{Kappes2012}. 

Other examples are augmented Lagrangian based optimization schemes, which were recently applied to the MAP inference problem by~\cite{MartinsICML11,MeshiGloversonECML11}. These algorithms augment the Lagrangian~\equ{eq:fpd-saddlepoint-lp} with a quadratic term and combine coordinate descent in the primal domain with subgradient steps in the dual one or vice versa. An important property of these schemes is that they maintain (in general) infeasible primal and dual estimates, which can be projected to the feasible sets with optimizing projections similar to the described $\SP_{E,\SL}$ and $\SP_{\SU,\BD}$.

However, we are not aware of methods for reconstructing primal solutions of the relaxed problem from dual estimates for non-smooth coordinate descent based schemes like {TRW-S} by~\cite{Kolmogorov06} and MPLP by~\cite{GlobersonJakollaNIPS2007}. Indeed, these schemes do not solve the relaxed MAP problem in general, hence even if one would have such a method at hand, it would not guarantee convergence of the primal estimates to the optimum.

\section{Experimental Analysis and Evaluation}\label{demo_Section}

The main goal of this section is to show how Theorem~\ref{mainPropertyOptProjection_tm} works in practice. Hence we provide only two groups of experiments to evaluate our method. Both concentrate on reconstructing feasible primal estimates for the MAP inference algorithms considered in Section~\ref{applications_sec}. In the first group we show how the projected primal MAP-solution converges to the optimum for a series of algorithms. In the second one we show how the bound~\equ{mainPropertyOptProjection_equ}-\equ{mainPropertyOptProjectionTrivialX_equ} allows for at least qualitative prediction of the objective value in the (feasible) projected point. We refer to our conference papers \citep{Savchynskyy11,SchmidtEMMCVPR11,Kappes2012,SavchynskyyUAI2012} for the experiments with an extended set of benchmark data.

For the experiments we employ our own implementations of the First Order Primal Dual Algorithm (acronym {\bf FPD}) as described by~\cite{SchmidtEMMCVPR11}, the adaptive diminishing smoothing  algorithm {\bf ADSAL} proposed by~\cite{SavchynskyyUAI2012}, the dual decomposition based subgradient ascent with an adaptive step-size rule according to~\citet[eq.17]{Kappes2012} and primal estimates based on averaged~\equ{averageSubgrad_equ} (acronym {\bf SG-AVE}) and weighted averaged~\equ{averageWeightedSubgrad_equ} (acronym {\bf SG-WEI}) subgradients, and finally Nesterov's accelerated gradient ascent method applied to the smoothed dual decomposition based objective~\equ{smoothApprox_ineq} (acronym {\bf NEST}) studied by~\cite{Savchynskyy11}. All implementations are based on data structures of the OpenGM library by \cite{KappesOpenGM2Arxiv}.

The optimizing projection to the local polytope w.r.t.\ to the MRF energy~\equ{primalOptMappingSimplexProjection_equ}-\equ{primalOptMappingTransportpProblem_equ} is computed using our implementation of a specialization of the simplex algorithm for transportation problems \citep{BazaraaJarvis77}. We adopted an elegant method by~\cite{BlandAnticycling76}, also discussed by~\cite{PapadimitriuSteiglitz}, to avoid cycling. The source code of the solver can be downloaded from the first author's web-site\footnote{http://hci.iwr.uni-heidelberg.de/Staff/bsavchyn/software.php}. 

\paragraph{Feasible Primal Bound Estimation} In the first series, we demonstrate that for all three groups of methods discussed in Section~\ref{applications_sec} our method efficiently provides feasible primal estimates for the MAP inference problem~\equ{LPRelaxation_equ}. To this end we generated a $256\times 256$ grid model with $4$ variable states ($|\SX_v|=4$) and potentials randomly distributed in the interval $[0,1]$. We solved an LP relaxation of the MAP inference problem with {\bf FPD} as a representative of methods dealing with infeasible primal estimates, subgradient methods {\bf SG-AVE}, {\bf SG-WEI} and {\bf ADSAL} as the fastest representatives of smoothing-based algorithms. The corresponding plots are presented in Fig.~\ref{fig:exp1}. We note that in {\em all} experiments the time needed to compute the optimizing projection $\SP_{E,\SL}$ did not exceed the time needed to compute the subgradient/gradient of the dual function $U$/$\hat U_{\rho}$ and requires 0.01-0.02 s on a 3GHz machine. The generated dataset is not LP tight, hence the obtained relaxed primal solution has a significantly smaller energy than the integer one. In contrast to the cases where only non-relaxed integer primal estimates are computed, the primal and dual bounds of the relaxed problem converge to the same limit value. Due to the feasibility of both primal and dual estimates, the primal and dual objective functions' values bound the optimal value of the relaxed problem from above and below, respectively.

\begin{figure}[t]
\begin{center}
\mbox{\includegraphics[width=0.48\linewidth]{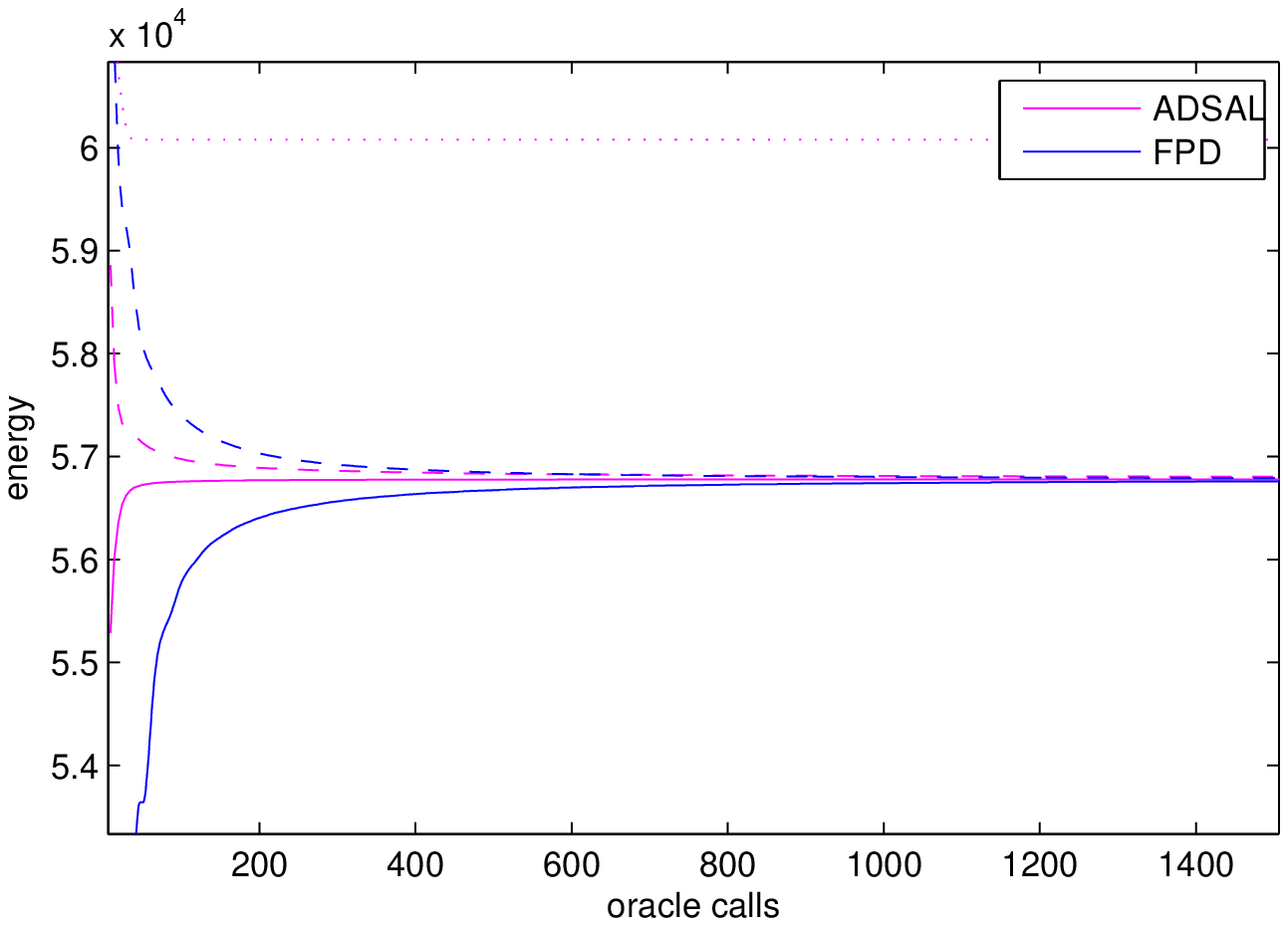}}
\mbox{\includegraphics[width=0.48\linewidth]{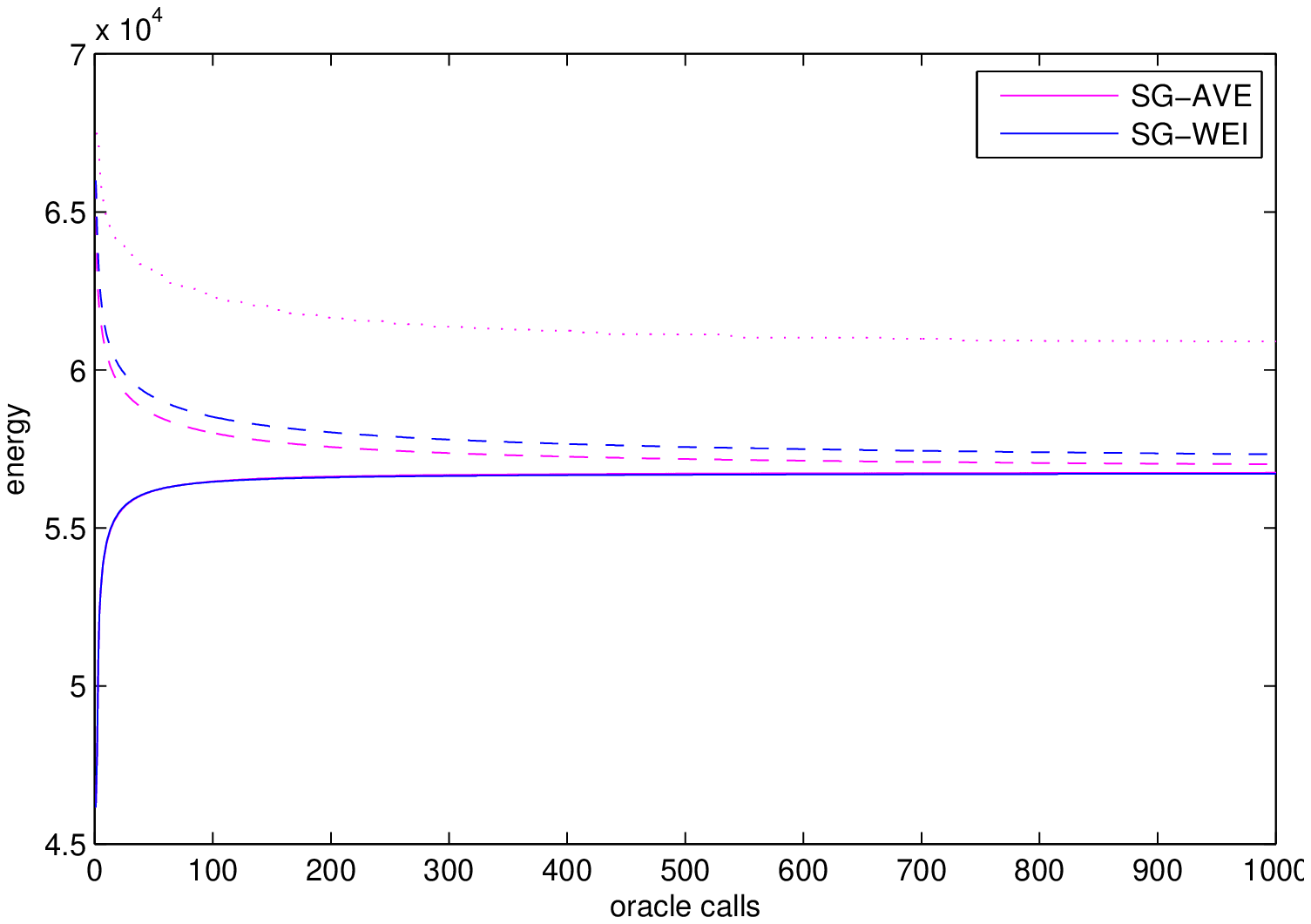}}
\end{center}

\caption{
  Convergence of the primal (dashed lines) and dual (solid lines) bounds to the same optimal limit value for {\bf ADSAL} and {\bf FPD} algorithms (left) and {\bf SG-AVE} and {\bf SG-WEI} (right). The obtained integer bound is plotted as a dotted line. Note that due to the feasibility of both primal and dual estimates, the primal and dual objective functions' values bound the optimal value of the relaxed problem from above and below, respectively.
}
\label{fig:exp1}
\end{figure}

\paragraph{Evaluation of Convergence Bound} The second series of experiments is devoted to the evaluation of the convergence bounds provided by Theorem~\ref{mainPropertyOptProjection_tm}. To this end, we generated four LP-tight grid-structured datasets with known optimal labeling. We refer to~\citet[pp.~95-96]{SchmidtEMMCVPR11} for a description of the generation process. The resulting unary and pairwise potentials were distributed in the interval $[-10,10]$. We picked up a random subset of edges not belonging to the optimal labeling and assigned them ``infinite'' values. We created four datasets with ``infinities'' equal to $10~000$, $100~000$, $1~000~000$ and $10~000~000$ and ran {\bf NEST} for inference. According to Theorem~\ref{mainPropertyOptProjection_tm} the energy $E$ evaluated on projected feasible estimates $\SP_{E,\SL}(\mu_{\SV}^t,\mu_{\SE}^t)$, $t=1,\dots,\infty$, where the $\mu^t$ were reconstructed from dual estimates according to~\equ{SmoothGradConverg_equ}, can be represented as
\begin{equation}
 E(\SP_{E,\SL}(\mu_{\SV}^t,\mu_{\SE}^t))=F(\mu^t) + L_Y(E)\Vert\mu^t - \Pi_{\SL}\mu^t\Vert
\end{equation}
for a suitably selected function $F$. Since {\bf NEST} is a purely dual method and ``infinite'' pairwise potentials did not make any significant contribution to  values and gradients of the (smoothed) dual objective, the infeasible primal estimates $\mu^t$ were the same for all four different approximations of the infinity value. Since according to Lemma~\ref{linearFunctionLipschitzConst_lem} the Lipschitz constant $L_Y(E)$ is asymptotically proportional to the values of the binary potentials $\theta_{\SE}$ we plotted the values $\log E(\SP_{E,\SL}(\mu_{\SV}^t,\mu_{\SE}^t))$ as a function of $t$ for all four datasets in Fig.~\ref{fig:exp2}. As predicted by Theorem~\ref{mainPropertyOptProjection_tm} the corresponding energy values differ by approximately a factor of $10$, as the ``infinite'' values do. Due to the logarithmic energy scale this difference corresponds to equal log-energy distances between the curves in Fig~\ref{fig:exp2}.

\begin{figure}[t]
\begin{center}
\mbox{\includegraphics[width=0.48\linewidth]{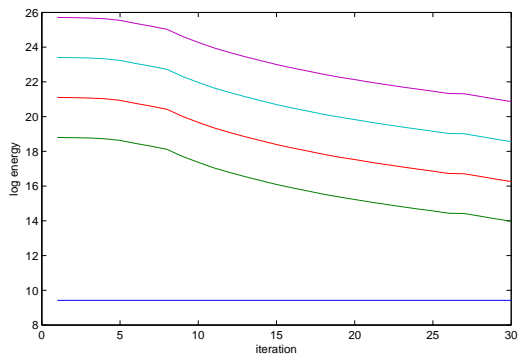}}
\end{center}

\caption{
  Convergence of the obtained primal feasible solution for four datasets which differ only by the values used as ``infinity''. The energy values are plotted in logarithmic scale. From bottom to top: optimal log-energy, primal bounds corresponding to infinity values equal to $10~000$, $100~000$, $1~000~000$ and $10~000~000$. Please note that as predicted by Theorem~\ref{mainPropertyOptProjection_tm} and Lemma~\ref{linearFunctionLipschitzConst_lem} the distance between corresponding log-energies remains approximately the same for all time steps and is equal to $\log 10$, which corresponds to the multiplication factor determining the relation between different values of ``infinity''. 
}
\label{fig:exp2}
\end{figure}

%
 
\section{Conclusions}\label{Conclusions_Section}

We presented an efficient and quite general optimizing projection method for computing feasible primal estimates for dual and primal-dual optimization schemes. The method provides convergence guarantees similar to the ones of the Euclidean projection, but contrary to it, it allows for efficient computations, when the feasible set and the objective function posses certain separability properties.
As any optimization tool it has also certain limitations related to the Lipschitz continuity of the primal objective, however exactly the same limitations are characteristic also for the Euclidean projection. Hence they can not be considered as disadvantages of particularly this method, but rather as disadvantages of all projection methods in general and can be overcome only by constructing algorithms, which intrinsically maintain feasible primal estimates during iterations. The construction of such algorithms has to be addressed in future work.

\paragraph{Acknowledgement.} This work has been supported by the German Research Foundation (DFG) within the program ``Spatio-/Temporal Graphical Models and Applications in
Image Analysis'', grant GRK 1653.

\bibliographystyle{named}
\bibliography{main}

\begin{thebibliography}{}

\bibitem[\protect\citeauthoryear{Andres \bgroup \em et al.\egroup
  }{2012}]{KappesOpenGM2Arxiv}
Bjoern Andres, Thorsten Beier, and Joerg~H. Kappes.
\newblock {OpenGM}: A {C}++ library for discrete graphical models.
\newblock Technical report, arXiv:1206.0111, 2012.

\bibitem[\protect\citeauthoryear{Bazaraa and Jarvis}{1977}]{BazaraaJarvis77}
M.~S. Bazaraa and J.~J. Jarvis.
\newblock {\em Linear Programming and Network Flows}.
\newblock Wiley, 1977.

\bibitem[\protect\citeauthoryear{Bland}{1976}]{BlandAnticycling76}
R.G. Bland.
\newblock New finite pivoting rules.
\newblock {\em Discussion Paper 7612, Center for Operations Research and
  Econometrics (CORE), Universit\'e Catholique de Louvain, Heverlee, Belgium},
  1976.

\bibitem[\protect\citeauthoryear{Boyd and Vandenberghe}{2004}]{Boyd2004}
Stephen Boyd and Lieven Vandenberghe.
\newblock {\em Convex Optimization}.
\newblock Cambridge University Press, New York, USA, 2004.

\bibitem[\protect\citeauthoryear{Boykov \bgroup \em et al.\egroup
  }{2001}]{BoykovFastApproxGraphCuts2001}
Yuri Boykov, Olga Veksler, and Ramin Zabih.
\newblock Fast approximate energy minimization via graph cuts.
\newblock {\em PAMI, vol. 23, no. 11, pp. 1222-1239}, 23:1222--1239, 2001.

\bibitem[\protect\citeauthoryear{Chambolle and Pock}{2010}]{ChambollePock11}
Antonin Chambolle and Thomas Pock.
\newblock A first-order primal-dual algorithm for convex problems with
  applications to imaging.
\newblock {\em Journal of Mathematical Imaging and Vision}, pages 1--26, 2010.

\bibitem[\protect\citeauthoryear{Globerson and
  Jaakkola}{2007}]{GlobersonJakollaNIPS2007}
Amir Globerson and Tommi Jaakkola.
\newblock Fixing max-product: Convergent message passing algorithms for {MAP}
  {LP}-relaxations.
\newblock In {\em NIPS}, 2007.

\bibitem[\protect\citeauthoryear{Hazan and Shashua}{2010}]{HazanShashua}
T.~Hazan and A.~Shashua.
\newblock Norm-product belief propagation: Primal-dual message-passing for
  approximate inference.
\newblock {\em IEEE Trans. on Inf. Theory,}, 56(12):6294 --6316, Dec. 2010.

\bibitem[\protect\citeauthoryear{Hazan \bgroup \em et al.\egroup
  }{2012}]{HazanUAI2012}
T.~Hazan, J.~Peng, and A.~Shashua.
\newblock Tightening fractional covering upper bounds on the partition function
  for high-order region graphs.
\newblock In {\em Uncertainty in Artificial Intelligence (UAI)}, 2012.

\bibitem[\protect\citeauthoryear{Jancsary and Matz}{2011}]{Jancsary2011d}
Jeremy Jancsary and Gerald Matz.
\newblock Convergent decomposition solvers for tree-reweighted free energies.
\newblock In {\em 14th International Conference on Artificial Intelligence and
  Statistics (AISTATS)}, 2011.

\bibitem[\protect\citeauthoryear{Johnson \bgroup \em et al.\egroup
  }{2007}]{Johnson07}
Jason~K. Johnson, Dmitry Malioutov, and Alan~S. Willsky.
\newblock Lagrangian relaxation for {MAP} estimation in graphical models.
\newblock In {\em 45th Ann. Allerton Conf. on Comm., Control and Comp.}, 2007.

\bibitem[\protect\citeauthoryear{Kappes \bgroup \em et al.\egroup
  }{2012}]{Kappes2012}
J{\"o}rg~Hendrik Kappes, Bogdan Savchynskyy, and Christoph Schn{\"o}rr.
\newblock A bundle approach to efficient {MAP}-inference by lagrangian
  relaxation.
\newblock In {\em CVPR 2012}, 2012.
\newblock in press.

\bibitem[\protect\citeauthoryear{Kolmogorov}{2006}]{Kolmogorov06}
Vladimir Kolmogorov.
\newblock Convergent tree-reweighted message passing for energy minimization.
\newblock {\em IEEE Trans. on PAMI}, 28(10):1568--1583, 2006.

\bibitem[\protect\citeauthoryear{Komodakis \bgroup \em et al.\egroup
  }{2007}]{Komodakis07mrfoptimization}
Nikos Komodakis, Nikos Paragios, and Georgios Tziritas.
\newblock {MRF} optimization via dual decomposition: Message-passing revisited.
\newblock In {\em ICCV}, 2007.

\bibitem[\protect\citeauthoryear{Komodakis \bgroup \em et al.\egroup
  }{2011}]{Komodakis10journal}
Nikos Komodakis, Nikos Paragios, and Georgios Tziritas.
\newblock {MRF} energy minimization and beyond via dual decomposition.
\newblock {\em IEEE Trans. PAMI}, 33:531--552, March 2011.

\bibitem[\protect\citeauthoryear{Larsson \bgroup \em et al.\egroup
  }{1999}]{Larsson99_ergodic}
Torbj\"orn Larsson, Michael Patriksson, and Ann-Brith Str\"omberg.
\newblock Ergodic, primal convergence in dual subgradient schemes for convex
  programming.
\newblock {\em Mathematical Programming}, 86:283--312, 1999.

\bibitem[\protect\citeauthoryear{Martins \bgroup \em et al.\egroup
  }{2011}]{MartinsICML11}
A.~F.~T. Martins, M.~A.~T. Figueiredo, P.~M.~Q. Aguiar, N.~A. Smith, and E.~P.
  Xing.
\newblock An augmented lagrangian approach to constrained {MAP} inference.
\newblock In {\em ICML}, 2011.

\bibitem[\protect\citeauthoryear{Meshi and
  Globerson}{2011}]{MeshiGloversonECML11}
Ofer Meshi and Amir Globerson.
\newblock An alternating direction method for dual {MAP} {LP} relaxation.
\newblock In {\em ECML/PKDD (2)}, pages 470--483, 2011.

\bibitem[\protect\citeauthoryear{Michelot}{1986}]{Michelot86}
C~Michelot.
\newblock A finite algorithm for finding the projection of a point onto the
  canonical simplex of {R}n.
\newblock {\em J. Optim. Theory Appl.}, 50:195--200, July 1986.

\bibitem[\protect\citeauthoryear{Nesterov}{2004}]{Nesterov04}
Yurii Nesterov.
\newblock Smooth minimization of non-smooth functions.
\newblock {\em Math. Program.}, Ser. A(103):127--152, 2004.

\bibitem[\protect\citeauthoryear{Papadimitriou and
  Steiglitz}{1998}]{PapadimitriuSteiglitz}
Christos~H. Papadimitriou and Kenneth Steiglitz.
\newblock {\em Combinatorial optimization: algorithms and complexity}.
\newblock Mineola, N.Y. : Dover Publications, 2nd edition, 1998.

\bibitem[\protect\citeauthoryear{Ravikumar \bgroup \em et al.\egroup
  }{2010}]{Ravikumar10}
Pradeep Ravikumar, Alekh Agarwal, and Martin Wainwright.
\newblock Message-passing for graph-structured linear programs: Proximal
  methods and rounding schemes.
\newblock {\em JMLR}, 11:1043--1080, 2010.

\bibitem[\protect\citeauthoryear{Rockafellar and
  Wets}{2004}]{Rockafellar-Wets-04}
R.T. Rockafellar and R.~J.-B. Wets.
\newblock {\em Variational Analysis}.
\newblock Springer, 2nd edition, 2004.

\bibitem[\protect\citeauthoryear{Savchynskyy \bgroup \em et al.\egroup
  }{2011}]{Savchynskyy11}
Bogdan Savchynskyy, J{\"o}rg Kappes, Stefan Schmidt, and Christoph Schn{\"o}rr.
\newblock A study of {N}esterov's scheme for {L}agrangian decomposition and
  {MAP} labeling.
\newblock In {\em CVPR 2011}, 2011.

\bibitem[\protect\citeauthoryear{Savchynskyy \bgroup \em et al.\egroup
  }{2012}]{SavchynskyyUAI2012}
Bogdan Savchynskyy, Stefan Schmidt, J\"org Kappes, and Christoph Schn\"orr.
\newblock Efficient {MRF} energy minimization via adaptive diminishing
  smoothing.
\newblock In {\em Uncertainty in Artificial Intelligence}, pages 746--755. AUAI
  Press, 2012.

\bibitem[\protect\citeauthoryear{Schlesinger and
  Giginyak}{2007}]{SchlGig_12_usim2007}
M.I. Schlesinger and V.V. Giginyak.
\newblock Solution to structural recognition (max,+)-problems by their
  equivalent transformations. in 2 {P}arts.
\newblock {\em Control Systems and Computers}, (1-2), 2007.

\bibitem[\protect\citeauthoryear{Schlesinger \bgroup \em et al.\egroup
  }{2011}]{SchlesingerEMMCVPR11}
Michail Schlesinger, Evgeniy Vodolazskiy, and Nikolai Lopatka.
\newblock Stop condition for subgradient minimization in dual relaxed (max,+)
  problem.
\newblock In {\em EMMCVPR}, 2011.

\bibitem[\protect\citeauthoryear{Schlesinger}{1976}]{Schlesinger76}
Michail~I. Schlesinger.
\newblock Syntactic analysis of two-dimensional visual signals in the presence
  of noise.
\newblock {\em Kibernetika}, (4):113--130, July-August 1976.

\bibitem[\protect\citeauthoryear{Schmidt \bgroup \em et al.\egroup
  }{2011}]{SchmidtEMMCVPR11}
Stefan Schmidt, Bogdan Savchynskyy, J{\"o}rg Kappes, and Christoph Schn{\"o}rr.
\newblock Evaluation of a first-order primal-dual algorithm for {MRF} energy
  minimization.
\newblock In {\em EMMCVPR 2011}, 2011.

\bibitem[\protect\citeauthoryear{Sontag \bgroup \em et al.\egroup
  }{2008}]{SontagEtAl_uai08}
David Sontag, Talya Meltzer, Amir Globerson, Yair Weiss, and Tommi Jaakkola.
\newblock Tightening {LP} relaxations for {MAP} using message-passing.
\newblock In {\em 24th Conference in Uncertainty in Artificial Intelligence},
  pages 503--510. AUAI Press, 2008.

\bibitem[\protect\citeauthoryear{Wainwright and
  Jordan}{2008}]{WainwrightBook08}
Martin~J. Wainwright and Michael~I. Jordan.
\newblock Graphical models, exponential families, and variational inference.
\newblock {\em Found. Trends Mach. Learn.}, 1(1-2):1--305, 2008.

\bibitem[\protect\citeauthoryear{Wainwright \bgroup \em et al.\egroup
  }{2005}]{WainwrightLogPartition2005}
M.~Wainwright, T.~Jaakkola, and A.~Willsky.
\newblock A new class of upper bounds on the log partition function.
\newblock {\em IEEE Trans. on Information Theory}, 51:2313--2335, 2005.

\bibitem[\protect\citeauthoryear{Weiss and Freeman}{2001}]{WeissF01}
Yair Weiss and William~T. Freeman.
\newblock On the optimality of solutions of the max-product belief-propagation
  algorithm in arbitrary graphs.
\newblock {\em IEEE Transactions on Information Theory}, 47(2):736--744, 2001.

\bibitem[\protect\citeauthoryear{Werner}{2007}]{Werner07}
Tomas Werner.
\newblock A linear programming approach to max-sum problem: A review.
\newblock {\em IEEE Trans. on PAMI}, 29(7), July 2007.

\bibitem[\protect\citeauthoryear{Werner}{2009}]{WernerReport09}
Tomas Werner.
\newblock Revisiting the decomposition approach to inference in exponential
  families and graphical models.
\newblock Technical report, CMP, Czech TU, 2009.

\bibitem[\protect\citeauthoryear{Werner}{2011}]{WernerCSM2011}
Tomas Werner.
\newblock How to compute primal solution from dual one in {MAP} inference in
  {MRF}?
\newblock {\em Control Systems and Computers}, (2), March-April 2011.

\end{thebibliography}
\end{document}